\def\vph{\varphi}
\def\B{\mathcal{B}}
\def\Z{\mathbb{Z}}
\renewcommand\Box{\square}
\newtheorem{thmA}{Theorem}
\newtheorem{propA}[thmA]{Proposition}
\newtheorem{lem}{Lemma}
\newtheorem{thm}{Theorem}
\newtheorem{obs}[lem]{Observation}
\theoremstyle{definition}
\newtheorem{ques}{Question}
\newcommand{\aside}[1]{\marginnote{\scriptsize{#1}}[0cm]}
\newcommand{\aaside}[2]{\marginnote{\scriptsize{#1}}[#2]}
\newcommand\Emph[1]{\emph{#1}\aside{#1}}
\newcommand\EmphE[2]{\emph{#1}\aaside{#1}{#2}}
\newcommand\chios{\chi_{\textrm{\scriptsize{os}}}}
\newcommand\ods{{\textrm{ods}}}
\tikzstyle{uCircle}=[shape = circle, minimum size = 6pt, inner sep = 3pt, outer sep = 0pt, fill = white, draw]
\tikzstyle{uCircleSmall}=[shape = circle, minimum size = 3pt, inner sep = 1.5pt,
\tikzstyle{uSquareSmall}=[shape = rectangle, minimum size = 4pt, inner sep = 1.5pt,
\tikzstyle{uSquareSmallW}=[shape = rectangle, minimum size = 4pt, inner sep = 1.5pt,
\tikzstyle{uCircleSmallW}=[shape = circle, minimum size = 3pt, inner sep = 1.5pt,
\tikzstyle{uSquare}=[shape = rectangle, minimum size = 8pt, inner sep = 3pt, outer sep = 0pt, draw]
\tikzstyle{uCircleFill}=[shape = circle, minimum size = 6pt, inner sep = 3pt, outer sep = 0pt, fill = black!25!white, draw]
\tikzstyle{uSquareFill}=[shape = rectangle, minimum size = 8pt, inner sep = 3pt,
\author{Daniel W. Cranston\thanks{%
Department of Computer Science, Virginia Commonwealth
University, Richmond, VA, USA;
\texttt{dcranston@vcu.edu}
}}
\begin{document}
\title{Odd-Sum Colorings of Planar Graphs}
\maketitle
\abstract{
A \emph{coloring} of a graph $G$ is a map $f:V(G)\to \Z^+$ such that $f(v)\ne
f(w)$ for all $vw\in E(G)$.  A coloring $f$ is an \emph{odd-sum} coloring if
$\sum_{w\in N[v]}f(w)$ is odd, for each vertex $v\in V(G)$.  The \emph{odd-sum
chromatic number} of a graph $G$, denoted $\chios(G)$, is the minimum number of
colors used (that is, the minimum size of the range) in an odd-sum coloring of
$G$.  Caro, Petru\v{s}evski, and \v{S}krekovski showed, among other results, that
$\chios(G)$ is well-defined for every finite graph $G$ and, in fact,
$\chios(G)\le 2\chi(G)$.  Thus, $\chios(G)\le 8$ for every planar graph $G$ (by
the 4 Color Theorem), $\chios(G)\le 6$ for every triangle-free planar graph
$G$ (by Gr\"{o}tzsch's Theorem), and $\chios(G)\le 4$ for every bipartite graph.  

Caro et al.~asked, for every even $\Delta\ge 4$, whether there
exists $g_{\Delta}$ such that if $G$ is planar with maximum degree $\Delta$ and
girth at least $g_{\Delta}$ then $\chios(G)\le 5$.
They also asked, for every even $\Delta\ge 4$, whether there
exists $g_{\Delta}$ such that if $G$ is planar and bipartite with maximum
degree $\Delta$ and girth at least $g_{\Delta}$ then $\chios(G)\le 3$.
We answer both questions negatively.
We also refute a conjecture they made, resolve one further problem they posed,
and make progress on another.
}
\bigskip

\section{Introduction}

A \emph{coloring} of a graph $G$ is a map $f:V(G)\to \Z^+$ such that $f(v)\ne
f(w)$ for all $vw\in E(G)$.  
In this note, we consider odd-sum coloring, which was recently introduced by
Caro, Petru\v{s}evski, and \v{S}krekovski~\cite{CPS}.  Specifically, we answer
two of their questions, refute one of their conjectures, solve one of their
problems, and make progress on another of their problems.

A coloring $f$ is an \Emph{odd-sum coloring} if
$\sum_{w\in N[v]}f(w)$ is odd, for each vertex $v\in V(G)$.  The \emph{odd-sum
chromatic number} of a graph $G$, denoted \Emph{$\chios(G)$}, is the minimum number of
colors used (that is, the minimum size of the range) in an odd-sum coloring of
$G$.  Caro, Petru\v{s}evski, and \v{S}krekovski showed (among other results) that
$\chios(G)$ is well-defined for every finite graph $G$ and, in fact,
$\chios(G)\le 2\chi(G)$.  Thus, $\chios(G)\le 8$ for every planar graph $G$ (by
the 4 Color Theorem), $\chios(G)\le 6$ for every triangle-free planar graph
$G$ (by Gr\"{o}tzsch's Theorem), and $\chios(G)\le 4$ for every bipartite graph.  

Caro et al.~asked, for every even $\Delta\ge 4$, whether there
exists $g_{\Delta}$ such that if $G$ is planar with maximum degree $\Delta$ and
girth at least $g_{\Delta}$ then $\chios(G)\le 5$.
They also asked, for every even $\Delta\ge 4$, whether there
exists $g_{\Delta}$ such that if $G$ is planar and bipartite with maximum
degree $\Delta$ and girth at least $g_{\Delta}$ then $\chios(G)\le 3$.
In Sections~\ref{sec2} and~\ref{sec3}, we answer both questions
negatively.  

Caro et al.~also conjectured that every planar graph $G$ with maximum
degree at most 5 has $\chios(G)\le 7$.  In Section~\ref{sec4} we construct
infinitely many counterexamples to this conjecture.
Further, they asked about the maximum odd-sum chromatic number of
a graph embeddable in each orientable surface, which we
consider in Section~\ref{sec5}.  Finally, they also
asked for a planar graph $G$ with $\chios(G)=8$ and at least two \Emph{odd-dominating
sets} (dominating sets $S$ such that $|N[v]\cap S|$ is odd, for every vertex
$v$); we provide numerous examples of such graphs in Section~\ref{sec6}.

For completeness, we include a few standard definitions.  The \Emph{girth} of a
graph is the length of its shortest cycle.  The \emph{chromatic number} of a
graph $G$, denoted \Emph{$\chi(G)$}, is the fewest colors that allow a proper
coloring of $G$.  The \emph{neighborhood} $N(v)$ of each vertex $v$ is defined
by $N(v):=\{x:vx\in E(G)\}$ and \Emph{$N[v]$}$:=N(v)\cup v$.
\section{Planar Graphs of High Girth}%
\label{sec2}

Caro et al.~\cite{CPS} showed that
$\chios(G)\le 2\chi(G)$ for every graph $G$.  So, every triangle-free planar
graph $G$, by Gr\"{o}tzsch's Theorem, satisfies $\chios(G)\le 6$.
For each even $\Delta\ge 4$, they~asked~\cite[Problem 6.7]{CPS}
whether there exists 
$g_{\Delta}$ such that every planar graph\footnote{They also posed the analogous
question for outerplanar graphs, which we do not consider.} with maximum degree
$\Delta$ and girth
at least $g_{\Delta}$ satisfies $\chios(G)\le 5$.  In this section, we answer
their question negatively.

\begin{thm}
Fix integers $k$ and $\Delta$.  If $k\ge 1$, $\Delta\ge 4$, and $\Delta$ is even,
then there exists a planar graph $J_{\Delta,k}$ with maximum degree $\Delta$ and
girth at least $k$ such that $\chios(J_{\Delta,k})=6$.
\label{Gabk-thm}
\end{thm}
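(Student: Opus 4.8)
**The plan is to build $J_{\Delta,k}$ from a carefully chosen high-girth "gadget" plus a parity argument that forces $6$ colors.**

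First I would observe that the upper bound $\chios(J_{\Delta,k})\le 6$ is automatic: take any high-girth planar graph, which is certainly triangle-free, so by Gr\"{o}tzsch's Theorem (and $\chios(G)\le 2\chi(G)$) we get $\chios\le 6$. So the entire content of the theorem is the lower bound: I must construct, for each even $\Delta\ge 4$ and each $k$, a planar graph of maximum degree $\Delta$ and girth at least $k$ that admits \emph{no} odd-sum coloring with $5$ colors. The natural strategy is to exhibit a graph that admits no odd-sum coloring from \emph{any} set of $5$ positive integers — equivalently, by standard reductions, to show that for every $5$-element subset $S\subseteq\Z^+$, there is a local obstruction. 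Note the parity of $\sum_{w\in N[v]}f(w)$ depends only on the residues $f(w)\bmod 2$, and which colors are available at a vertex depends on the coloring being proper; so the relevant data of a $5$-color palette is its partition into even and odd colors. The three cases are palettes with $1$, $2$, or $3$ odd colors (by symmetry $4$ odd colors behaves like $1$ even, etc.), and I would handle these uniformly if possible.

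The core gadget I would use is a long subdivided structure — e.g., take a vertex $u$ of degree $\Delta$ whose incident edges are long paths (each of length a multiple of something related to $k$) leading into small "parity-rigid" gadgets, or alternatively a large subdivision of a graph like $K_4$ or a theta-graph. The point of subdivision is twofold: it pushes the girth above $k$ while keeping the graph planar and bounded-degree, and long paths propagate parity constraints rigidly. Along an induced path $v_0v_1v_2\cdots$, the odd-sum condition at $v_i$ (for internal $i$) says $f(v_{i-1})+f(v_i)+f(v_{i+1})$ is odd; this is a linear recurrence mod $2$ that, together with the properness constraint, tightly restricts the residue pattern — in fact it forces $f(v_{i+1})\equiv f(v_{i-1})+f(v_i)+1\pmod 2$, and one checks the residues become eventually periodic with a short period, so the parity behavior at the far end is essentially determined by the palette and the start. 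I would use this to show that certain small planar configurations (a few vertices wired together through long paths) cannot be completed, no matter which $5$-color palette is used, because the forced residues collide with either a properness requirement (too few colors of the needed residue class available at a high-degree vertex) or with the odd-sum requirement at a branch vertex.

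\textbf{The main obstacle} will be making the parity obstruction robust against \emph{all} $5$-color palettes simultaneously — in particular the ambiguity of palettes with $2$ or $3$ odd colors, where a vertex can often be recolored within its residue class to fix a parity defect, so the gadget must remove that freedom. I expect the fix is to place high-degree vertices ($\deg=\Delta$) at strategic spots so that the proper-coloring constraint eats up most of the palette there, leaving at most one color of each residue and hence no recoloring slack; combined with a suitable small non-bipartite-under-parity core (a configuration whose "parity graph" has an odd closed walk forcing a contradiction mod $2$), this should pin down the coloring enough to derive a contradiction. Verifying girth at least $k$ and maximum degree exactly $\Delta$ is then routine once the gadget is drawn, as is planarity. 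I would also double-check the edge cases $\Delta=4$ (tightest degree budget) and small $k$, since those are where a hand-built gadget is most likely to fail the degree or girth bound and need local repair (e.g., by subdividing a few more edges or splitting a vertex).
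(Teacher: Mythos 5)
There is a genuine gap, and it lies exactly where you flagged your ``main obstacle.'' Your proposed mechanism for killing all $5$-color palettes is to place vertices of degree $\Delta$ so that ``the proper-coloring constraint eats up most of the palette there, leaving at most one color of each residue.'' In a graph of girth at least $5$ (which yours must have, since you need girth at least $k$ and you invoke Gr\"{o}tzsch for the upper bound), the neighborhood of any vertex is an independent set, so properness at a vertex never forces its neighbors to use distinct colors: all $\Delta$ neighbors may repeat a single color. Hence no local degree argument can exhaust a palette, and the ``no recoloring slack'' idea also misses the point for a second reason: the odd-sum condition depends only on residues mod $2$, so recoloring within a residue class never repairs a parity defect in the first place. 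The correct frame, which your observation about residues almost reaches, is this: in any odd-sum coloring the set $D$ of vertices with odd colors satisfies $|N[x]\cap D|$ odd for every $x$ (an odd-dominating set), and a coloring from a palette with $j$ odd colors requires $\chi(G[D])\le j$ and $\chi(G[V(G)\setminus D])\le 5-j$. Since every subgraph of a triangle-free planar graph is $3$-colorable, ruling out $5$ colors is equivalent to arranging that for \emph{every} odd-dominating set $D$, both $G[D]$ and $G[V(G)\setminus D]$ contain odd cycles. Your sketch never isolates this target, and its ``odd closed walk in the parity graph'' hint points at a mod-$2$ contradiction rather than at forcing non-bipartiteness of both residue classes.

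The second gap is that no construction is actually given, and the construction is the substance of the theorem. The paper's route (which your path-recurrence $f(v_{i+1})\equiv f(v_{i-1})+f(v_i)+1 \pmod 2$ is indeed the germ of) is: join two hubs $v,w$ by $a$ internally disjoint paths of length $3k+1$ and $b$ of length $3k+2$ with $a,b$ odd; parity propagation along each path shows that once $D$ is legal at all internal vertices, its trace on a path is uniquely determined by $D\cap\{v,w\}$, and oddness of $a,b$ forces $|D\cap N[v]|$ and $|D\cap N[w]|$ odd as well. Gluing two such gadgets at $v$ (with $a_1+b_1+a_2+b_2=\Delta$, all odd, which is where evenness of $\Delta$ enters), attaching a leaf at each $w$, and threading two odd cycles of length $4k+1$ through the leaves of $4k+1$ copies forces the leaves out of every odd-dominating set and forces everything else in; then $D$ induces cycles of length $6k+3$ and its complement induces the odd leaf-cycles, so $\chi(G[D])+\chi(G[V(G)\setminus D])=3+3=6$ for the unique $D$. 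Without some concrete gadget of this kind, together with the degree bookkeeping ($\Delta$ as a sum of odd path-multiplicities) and the argument that \emph{every} feasible parity pattern leaves odd cycles on both sides, your outline does not yet constitute a proof.
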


To prove Theorem~\ref{Gabk-thm}, we use Proposition~A. 
(For convenience, we reproduce the proof.)
Recall, for a graph $G$, that $D\subseteq V(G)$ is \Emph{odd-dominating} 
if $|D\cap N[x]|$ is odd for all $x\in V(G)$.

\begin{propA}[\cite{CPS}]
\label{propA1}
For every graph $G$, we have $\chios(G)=\min_D\left\{\chi(G[D])+\chi(G[V(G)\setminus
D])\right\}$, where $D$ ranges over all odd-dominating sets of $G$.  In particular,
$\chios(G)\le 2\chi(G)$.
\end{propA}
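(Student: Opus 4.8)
The plan is to prove the two inequalities $\chios(G)\le\min_D\{\chi(G[D])+\chi(G[V(G)\setminus D])\}$ and $\chios(G)\ge\min_D\{\chi(G[D])+\chi(G[V(G)\setminus D])\}$ separately, by passing back and forth between odd-sum colorings and pairs (odd-dominating set, pair of proper colorings of the two parts). The key structural observation is that an odd-sum coloring is essentially a proper coloring in which the \emph{parity} of each color matters only through the neighborhood sums, so the set $D$ of vertices receiving odd colors does all the work.

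For the upper bound, I would first recall (this is implicit in the statement, but worth proving) that every graph has an odd-dominating set; this is the classical fact that over $\mathbb{F}_2$ the system $(A+I)\mathbf{x}=\mathbf{1}$ is always solvable, which follows because $A+I$ is symmetric and hence its image equals the orthogonal complement of its kernel, while $\mathbf{1}$ is orthogonal to every $\mathbf{z}$ with $(A+I)\mathbf{z}=0$ (a short parity argument: $\mathbf{z}^\top(A+I)\mathbf{z}=0$ over $\mathbb{F}_2$ forces $\sum z_i=0$). Given any odd-dominating set $D$, take a proper coloring of $G[D]$ with colors that are all odd and a proper coloring of $G[V(G)\setminus D]$ with colors that are all even, using $\chi(G[D])$ and $\chi(G[V(G)\setminus D])$ values respectively; these can be realized with small positive integers of the correct parity, and combined they give a proper coloring of $G$ using $\chi(G[D])+\chi(G[V(G)\setminus D])$ colors. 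For each vertex $v$, the sum $\sum_{w\in N[v]}f(w)$ is odd precisely because the only odd contributions come from $N[v]\cap D$, whose size is odd by the choice of $D$. Taking the minimum over all odd-dominating sets $D$ gives the $\le$ direction.

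For the lower bound, start with an optimal odd-sum coloring $f$ using $\chios(G)$ colors, and let $D:=\{v: f(v)\text{ is odd}\}$. Then for every $v$, the parity of $\sum_{w\in N[v]}f(w)$ equals the parity of $|N[v]\cap D|$, so $D$ is odd-dominating. Moreover $f$ restricted to $D$ is a proper coloring of $G[D]$ using only odd colors, and $f$ restricted to $V(G)\setminus D$ is a proper coloring of $G[V(G)\setminus D]$ using only even colors; since these two color sets are disjoint, the number of colors used by $f$ is at least $\chi(G[D])+\chi(G[V(G)\setminus D])$, which is at least the claimed minimum. The ``in particular'' clause is immediate: taking $D=V(G)$ (which is odd-dominating when every $N[v]$ has odd size — but in general one should instead just note $\chi(G[D])\le\chi(G)$ and $\chi(G[V(G)\setminus D])\le\chi(G)$ for the minimizing $D$).

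The main obstacle is the existence of an odd-dominating set, i.e.\ ensuring the minimum is over a nonempty collection; everything else is bookkeeping about parities of color values. I would handle this via the linear-algebra argument over $\mathbb{F}_2$ sketched above, which is the one genuinely non-routine ingredient.
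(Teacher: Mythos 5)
Your proposal is correct and follows essentially the same route as the paper: color an odd-dominating set $D$ with odd colors and its complement with even colors for the upper bound, and for the lower bound take $D$ to be the odd-colored vertices of an optimal odd-sum coloring, noting the two color classes are disjoint; the ``in particular'' clause is likewise handled by observing both induced subgraphs have chromatic number at most $\chi(G)$. Your additional $\mathbb{F}_2$ argument for the existence of an odd-dominating set (Sutner's theorem) is valid but not part of the paper's proof, which implicitly relies on the cited result that $\chios$ is well-defined.
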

\begin{proof}
The second statement follows from the first, since $G[D]$ and $G[V(G)\setminus
D]$ are both subgraphs of $G$, and thus each has chromatic number at most
$\chi(G)$.  Now we prove the first.

Given any odd-dominating set $D$ of $G$, we can color $G[D]$ with colors $1, 3,
\ldots$ and color $G[V(G)\setminus D]$ with colors $2,4,\ldots$  Thus,
$\chios(G)$ is at most this minimum.  Conversely, given any odd-sum coloring
$\vph$, the vertices with odd colors form an odd-dominating set $D$, and
$\vph$ uses at least $\chi(G[D])$ colors on $D$ and at least
$\chi(G[V(G)\setminus D])$ colors on $V(G)\setminus D$.  Thus, $\chios$ is at
least this minimum.
\end{proof}

\begin{figure}[!h]
\centering
\begin{tikzpicture}[thick, scale=.6]
\tikzset{every node/.style=uCircleSmallW}

\draw (0,0) node[uSquareSmallW] (v) {} 
--++ (1.0,-1.7) node {} 
--++ (1,0) node {}
--++ (1,0) node {}
--++ (1,0) node {}
--++ (1.0,1.7) node[uSquareSmallW] {}
--++ (-1.5,1.5) node {}
--++ (-1,0) node {}
--++ (-1,0) node {}
--++ (-1.5,-1.5) node[uSquareSmallW] {};

\begin{scope}[xshift=2.45in]
\draw (0,0) node[uSquareSmallW] (v) {} 
--++ (1.0,-1.7) node {} 
--++ (1,0) node {}
--++ (1,0) node {}
--++ (1,0) node {}
--++ (1.0,1.7) node[uSquareSmallW] {}
(v)
--++ (1.0,-1.0) node {} 
--++ (1,0) node {}
--++ (1,0) node {}
--++ (1,0) node {}
--++ (1.0,1.0) node[uSquareSmallW] {}
(v)
--++ (1.0,-0.4) node {} 
--++ (1,0) node {}
--++ (1,0) node {}
--++ (1,0) node {}
--++ (1.0,0.4) node[uSquareSmallW] {}
--++ (-1.5,1.5) node {}
--++ (-1,0) node {}
--++ (-1,0) node {}
--++ (-1.5,-1.5) node[uSquareSmallW] {};
\end{scope}
\end{tikzpicture}
\caption{
Left: $G_{1,1,1}$ has 1 path of length $3(1)+1$, top; and 1 path of length $3(1)+2$, bottom.  
Right: $G_{1,3,1}$ has 1 path of length $3(1)+1$, top; and 3 paths of length $3(1)+2$, bottom.  
\label{Gabk-fig1}}
\end{figure}
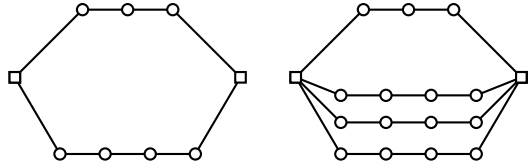

To prove Theorem~\ref{Gabk-thm}, we also use the following lemma.

\begin{lem}
\label{Gabk-lem}
Fix positive integers $a,b,k$.\aside{$a, b, k, v, w$}
Form \EmphE{$G_{a,b,k}$}{4mm} from two vertices $v$ and $w$ by adding $b$ $v,w$-paths
of lengths $3k+2$ and adding $a$ $v,w$-paths of length $3k+1$, with all paths
internally disjoint; see Figure~\ref{Gabk-fig1}.  Let $V_2:=V(G_{a,b,k})
\setminus\{v,w\}$.  If $D\subseteq V_2\cup \{v,w\}$\aside{$D$} such that
$|D\cap N[x]|\equiv 1\bmod 2$ for all $x\in V_2$, and $a$ and $b$ are odd, then
also $|D\cap N[x]|\equiv 1\bmod 2$ for all $x\in \{v,w\}$.
\end{lem}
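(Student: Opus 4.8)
The plan is to work over $\mathbb{F}_2$ and to analyze $D$ one $v,w$-path at a time. Fix one such path and list its vertices as $v=u_0,u_1,\dots,u_{\ell-1},u_\ell=w$, where $\ell\in\{3k+1,\,3k+2\}$; for $0\le i\le\ell$ set $x_i=1$ if $u_i\in D$ and $x_i=0$ otherwise. Since all of the paths share the endpoints $v$ and $w$, the two values $\alpha:=x_0$ and $\beta:=x_\ell$ do not depend on which path we chose. For an internal vertex $u_i$ we have $N[u_i]=\{u_{i-1},u_i,u_{i+1}\}$, so the hypothesis that $|D\cap N[u_i]|$ is odd says precisely that $x_{i-1}+x_i+x_{i+1}\equiv 1\pmod 2$ for every $i$ with $1\le i\le\ell-1$.

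First I would solve this recurrence. Writing it as $x_{i+1}=1+x_i+x_{i-1}$ over $\mathbb{F}_2$ and computing a few terms shows that the sequence $(x_i)_i$ has period $3$; in particular $x_i$ depends only on $i\bmod 3$ once $x_0$ and $x_1$ are fixed, and $x_2=1+x_0+x_1$. Now I specialize to the two path lengths. If $\ell=3k+1$, then $\ell\equiv 1$ and $\ell-1\equiv 0\pmod 3$, so the neighbor $u_1$ of $v$ has $x_1=x_\ell=\beta$ and the neighbor $u_{\ell-1}$ of $w$ has $x_{\ell-1}=x_0=\alpha$. If $\ell=3k+2$, then $\ell\equiv 2$ and $\ell-1\equiv 1\pmod 3$, so $\beta=x_\ell=x_2=1+\alpha+x_1$, which forces $x_1=1+\alpha+\beta$, and likewise $x_{\ell-1}=x_1=1+\alpha+\beta$. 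Thus: on each of the $a$ paths of length $3k+1$ the neighbor of $v$ contributes $\beta$ and the neighbor of $w$ contributes $\alpha$, while on each of the $b$ paths of length $3k+2$ both the neighbor of $v$ and the neighbor of $w$ contribute $1+\alpha+\beta$.

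Finally I would add these up. Since $N[v]$ consists of $v$ together with exactly one internal vertex from each of the $a+b$ paths (distinct vertices for distinct paths, as the paths are internally disjoint),
\[
|D\cap N[v]|\;\equiv\;\alpha+a\beta+b(1+\alpha+\beta)\pmod 2,
\]
and since $a\equiv b\equiv 1\pmod 2$ the right-hand side is $\alpha+\beta+1+\alpha+\beta\equiv 1\pmod 2$. The same computation with the roles of $\alpha$ and $\beta$ interchanged gives $|D\cap N[w]|\equiv\beta+a\alpha+b(1+\alpha+\beta)\equiv 1\pmod 2$, which is exactly the assertion of the lemma.

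There is no deep obstacle here: the only real content is the period-$3$ behavior of the recurrence $x_{i-1}+x_i+x_{i+1}\equiv 1$, which is precisely why the paths are chosen to have lengths $\equiv 1$ and $\equiv 2 \pmod 3$ (the two nonzero residues), together with the observation that oddness of $a$ and $b$ is what makes the final parity count collapse to $1$. The step most prone to slips is simply the bookkeeping — keeping the two path-length cases separate and tracking which residue class mod $3$ the $v$-neighbor and the $w$-neighbor fall into on each path.
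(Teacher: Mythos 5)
Your proof is correct and follows essentially the same route as the paper: analyze $D$ path by path, observe that the odd-closed-neighborhood condition at internal vertices forces the trace of $D$ on each path once its intersection with $\{v,w\}$ is fixed (this is exactly the period-$3$ structure the paper exhibits in its figure), and then use the oddness of $a$ and $b$ to get the parity at $v$ and $w$. The only difference is presentational: you solve the recurrence over $\mathbb{F}_2$ and sum $\alpha+a\beta+b(1+\alpha+\beta)$ uniformly in $\alpha,\beta$, whereas the paper checks the three cases $|D\cap\{v,w\}|\in\{0,1,2\}$ pictorially after reducing to $a=b=1$.
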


\begin{proof}
We will show that for each possibility for $D\cap \{v,w\}$, there is exactly one
possibility for $D$ restricted to a $v,w$-path of length $3k+1$, and exactly one
possibility for $D$ restricted to a $v,w$-path of length $3k+2$.  Let $P^1$ and
$P^2$\aside{$P^1$, $P^2$} denote, respectively, $v,w$-paths of lengths $3k+1$ and $3k+2$.
Note that if $\{v,w\}\not\subseteq D$, then we must have $|D\cap N[x]|=1$ for each
internal vertex $x$ of $P^1\cup P^2$.  (We can prove this formally by induction
on the distance of $x$ from one of $v$ and $w$ that is absent from $D$.) 
Suppose $v,w\notin D$.  Now $D$ must
contain the second internal vertex of $P^1$ (from either end) and every third
vertex thereafter; see the top row of Figure~\ref{Gabk-fig}.  Similarly, $D$
must contain the first internal vertex of $P^2$ and every third vertex
therafter.  Suppose instead (by symmetry) that $v\in D$ and $w\notin D$; see
the second row of Figure~\ref{Gabk-fig}.  Now $D$ must contain the third
internal vertices (away from $v$) of both $P^1$ and $P^2$, and every third
vertex thereafter.  Finally, suppose that $v,w\in D$; see the bottom row of
Figure~\ref{Gabk-fig}.  It is straightforward to check that $D$ must
contain all vertices of $P^1\cup P^2$.

Now, to verify the lemma, it suffices to consider the case that $a=b=1$, since
every pair of paths of the same length will make the same contribution to 
$|D\cap N[x]|$ for each $x\in \{v,w\}$ (so, when we delete a pair of paths
with the same length, these numbers are unchanged modulo 2).  To complete the
proof, it suffices to check the three cases shown in Figure~\ref{Gabk-fig}:
$|D\cap\{v,w\}|=0$ (top), $|D\cap\{v,w\}|=1$ (middle), and $|D\cap\{v,w\}|=2$
(bottom), for both paths with length $1\bmod 3$ (left) and paths with length
$2\bmod 3$ (right).
\end{proof}

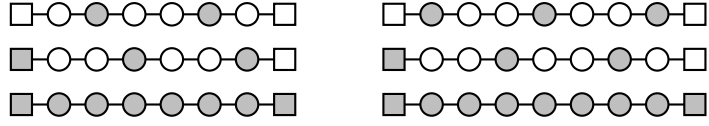
\begin{figure}[!t]
\centering
\begin{tikzpicture}[thick, scale=.5, yscale=.8]
\tikzset{every node/.style=uCircle}

\begin{scope}
\draw (0,0) node[uSquare] {} -- (1,0) node[uCircle] {} -- (2,0)
node[uCircleFill] {} -- (3,0) node[uCircle] {} -- (4,0)
node[uCircle] {} -- (5,0) node[uCircleFill] {} -- (6,0) 
node[uCircle] {} -- (7,0) node[uSquare] {};
\end{scope}

\begin{scope}[yshift=-1.5cm]
\draw (0,0) node[uSquareFill] {} -- (1,0) node[] {} -- (2,0)
node[] {} -- (3,0) node[uCircleFill] {} -- (4,0)
node[] {} -- (5,0) node[] {} -- (6,0) 
node[uCircleFill] {} -- (7,0) node[uSquare] {};
\end{scope}

\begin{scope}[yshift=-3cm]
\draw (0,0) node[uSquareFill] {} -- (1,0) node[uCircleFill] {} -- (2,0)
node[uCircleFill] {} -- (3,0) node[uCircleFill] {} -- (4,0)
node[uCircleFill] {} -- (5,0) node[uCircleFill] {} -- (6,0) 
node[uCircleFill] {} -- (7,0) node[uSquareFill] {};
\end{scope}

\begin{scope}[xshift=3.9in]

\begin{scope}
\draw (0,0) node[uSquare] {} -- (1,0) node[uCircleFill] {} -- (2,0)
node[uCircle] {} -- (3,0) node[uCircle] {} -- (4,0)
node[uCircleFill] {} -- (5,0) node[uCircle] {} -- (6,0) 
node[uCircle] {} -- (7,0) node[uCircleFill] {}
-- (8,0) node[uSquare] {};
\end{scope}

\begin{scope}[yshift=-1.5cm]
\draw (0,0) node[uSquareFill] {} -- (1,0) node[] {} -- (2,0)
node[] {} -- (3,0) node[uCircleFill] {} -- (4,0)
node[] {} -- (5,0) node[] {} -- (6,0) 
node[uCircleFill] {} -- (7,0) node[uCircle] {}
-- (8,0) node[uSquare] {};
\end{scope}

\begin{scope}[yshift=-3cm]
\draw (0,0) node[uSquareFill] {} -- (1,0) node[uCircleFill] {} -- (2,0)
node[uCircleFill] {} -- (3,0) node[uCircleFill] {} -- (4,0)
node[uCircleFill] {} -- (5,0) node[uCircleFill] {} -- (6,0) 
node[uCircleFill] {} -- (7,0) node[uCircleFill] {} 
-- (8,0) node[uSquareFill] {};
\end{scope}

\end{scope}

\end{tikzpicture}
\caption{In each path, $v$ is the left endpoint and $w$ is the right endpoint
(shown as squares).
Each row shows two paths that would have their endpoints identified in a copy of
$G_{a,b,k}$.
Left: A path of length $1\bmod 3$, and the unique possibility for an
odd-dominating set, given each intersection with the path's endpoints.
Right: A path of length $2\bmod 3$, and the unique possibility for an
odd-dominating set, given each intersection with the path's endpoints.
\label{Gabk-fig}}
\end{figure}

\begin{figure}[!b]
\centering

\tikzstyle{uCircleSmall}=[shape = circle, minimum size = 4pt, inner sep = 1.5pt,
outer sep = 0pt, fill = black!25!white, draw]
\tikzstyle{uCircleSmallW}=[shape = circle, minimum size = 4pt, inner sep = 1.5pt,
outer sep = 0pt, fill = white, draw]
\tikzstyle{uSquareSmall}=[shape = rectangle, minimum size = 5pt, inner sep = 1.5pt,
outer sep = 0pt, fill = black!25!white, draw]
\tikzstyle{uSquareSmallW}=[shape = rectangle, minimum size = 5pt, inner sep = 1.5pt,
outer sep = 0pt, fill = white, draw]

\begin{tikzpicture}[thick, scale=.65, xscale=1.0, yscale=.75]
\tikzset{every node/.style=uCircleSmallW}
\newcommand\gadget[4]{%
\begin{scope}[xshift=#1in, yshift=#2in, yscale=-1.3, rotate=90, scale=.5]
\tikzset{every node/.style=uCircleSmall}
\draw (0,0) node[uSquareSmall] (v) {} 
--++ (1.0,-1.7) node {} 
--++ (1,0) node {}
--++ (1,0) node {}
--++ (1,0) node {}
--++ (1.0,1.7) node[uSquareSmall] (w) {}
--++ (-1.5,1.5) node {}
--++ (-1,0) node {}
--++ (-1,0) node {}
--++ (-1.5,-1.5) node[uSquareSmall] {};
\draw (v) --++(-1,0) node[uCircleSmallW] (#3) {};

\begin{scope}[xshift=1.97in]
\draw (0,0) node[uSquareSmall] (v) {} 
--++ (1.0,-1.7) node {} 
--++ (1,0) node {}
--++ (1,0) node {}
--++ (1,0) node {}
--++ (1.0,1.7) node[uSquareSmall] {}
(v)
--++ (1.0,-1.0) node {} 
--++ (1,0) node {}
--++ (1,0) node {}
--++ (1,0) node {}
--++ (1.0,1.0) node[uSquareSmall] {}
(v)
--++ (1.0,-0.4) node {} 
--++ (1,0) node {}
--++ (1,0) node {}
--++ (1,0) node {}
--++ (1.0,0.4) node[uSquareSmall] (w) {} %
--++ (-1.5,1.5) node {}
--++ (-1,0) node {}
--++ (-1,0) node {}
--++ (-1.5,-1.5) node[uSquareSmall] {};
\end{scope}
\draw (w) --++(1,0) node[uCircleSmallW] (#4) {};
\end{scope}
}

\begin{scope}[yshift=-2.95in, xshift=5.1in, xscale=-1.2]
\gadget{1}{1}{v1}{w1}
\gadget{2}{1}{v2}{w2}
\gadget{3}{1}{v3}{w3}
\gadget{4}{1}{v4}{w4}
\gadget{5}{1}{v5}{w5}

\draw (w1) edge[bend right=20] (w2) (w2) edge[bend right=20] (w3) (w3) edge[bend
right=20] (w4) (w4) edge[bend right=20] (w5) (w1) edge[bend right=30] (w5);

\draw (v1) edge[bend left=20] (v2) (v2) edge[bend left=20] (v3) (v3) edge[bend
left=20] (v4) (v4) edge[bend left=20] (v5) (v1) edge[bend left=30] (v5);

\end{scope}
\end{tikzpicture}
\caption{$J_{6,1}$ has $\Delta=6$ and girth $4(1)+1$.  Its unique
odd-dominating set is shown in gray.\label{JkDelta-fig}}
\end{figure}

\begin{proof}[Proof of Theorem~\ref{Gabk-thm}]
Fix $k$ and $\Delta$ satisfying the hypotheses of the theorem.  Since
$\Delta\ge 4$ and $\Delta$ is even, there exist odd positive integers
\EmphE{$a_1,a_2,b_1,b_2$}{0mm} such that $\Delta=a_1+a_2+b_1+b_2$.  Form
\EmphE{$H_{\Delta,k}$}{0mm} from
copies of $G_{a_1,b_1,k}$ and $G_{a_2,b_2,k}$ (as in Lemma~\ref{Gabk-lem}) by
identifying the copies of $v$ in these two graphs.  Form
\EmphE{$H'_{\Delta,k}$}{0mm} by
adding a leaf adjacent to each copy of $w$ in $H_{\Delta,k}$.  Call one leaf the
\emph{left leaf} and the other the \emph{right leaf}
(for concreteness, assume the neighbor of the right leaf has degree at least
that of the neighbor of the left leaf).

Form $J_{\Delta,k}$ from $4k+1$ copies of $H'_{\Delta,k}$ by adding a cycle
through all the left leaves and a cycle through all the right leaves.  (We
assume these cycles visit the copies of $H'_{\Delta,k}$ in the
same order, which is needed to ensure planarity of $J_{\Delta,k}$.)  Clearly,
$J_{\Delta,k}$ is a planar graph with maximum degree $\Delta$ and girth $4k+1$.
 We will show that $\chios(J_{\Delta,k})\ge 6$ (for all integers $k$ and
$\Delta$ satisfying the hypotheses).
In fact, we will show that $J_{\Delta,k}$ has a unique odd-dominating set $D$
consisting of all vertices appearing in copies of $H_{\Delta,k}$.
Thus, $\chi(J_{\Delta,k}[D])=3$ and $\chi(J_{\Delta,k}[V(J_{\Delta,k})\setminus
D])=3$, so $\chios(J_{\Delta,k})=3+3=6$; see Proposition~\ref{propA1}.

Consider an odd-dominating set $D$ for $J_{\Delta,k}$.  Let $D'$ denote the
restriction of $D$ to some copy of $G_{a_i,b_i,k}$ for some $i\in\{1,2\}$.
Clearly, $D'$ satisfies the hypotheses of Lemma~\ref{Gabk-lem}, so $|D'\cap
N[w]|\equiv 1\bmod2$.  Since $w$ has only one neighbor, call it $y$, outside
of $G_{a_i,b_i,k}$, we conclude that $y\notin D$.  Thus, no vertex that was a
left leaf or right leaf in a copy of $H'_{\Delta,k}$ is in $D$.  But this
implies that every copy of $w$ is in $D$.  Suppose some copy of $v$ is not in
$D$.  Let $D_1$ and $D_2$ denote the restrictions of $D$ to the copies of
$G_{a_1,b_1,k}$ and $G_{a_2,b_2,k}$ containing $v$.  By Lemma~\ref{Gabk-lem},
we have $|N[v]\cap D_1|\equiv |N[v]\cap D_2|\equiv 1 \bmod 2$.  Since
$((N[v]\cap D_1)\cap (N[v]\cap D_2))= \{v\}\cap D =\emptyset$, we have 
$|N[v]\cap D|\equiv 1+1\equiv 0\bmod 2$, a contradiction.  Thus, $v\in D$, as
claimed.  Finally, it is easy to check, for each $i\in\{1,2\}$, that if
$|N[x]\cap D|\equiv1\bmod 2$ for all $x\in V(G_{a_i,b_i,k})\setminus\{v,w\}$,
and $v,w\in D$, then $V(G_{a_i,b_i,k})\subseteq D$; see the bottom row of
Figure~\ref{Gabk-fig}.  Thus, $D$ consists precisely of all vertices that are
neither right nor left leaves.  Since the left leaves induce an odd
cycle (as do the right leaves), and each copy of $G_{a_1,b_1,k}$ has a
cycle of length $6k+3$, the theorem holds.
\end{proof}

In Theorem~\ref{Gabk-thm} we require that $\Delta$ is even, because
that is what Caro et al.~asked for.  But it is easy to extend to the case when
$\Delta$ is odd.  We now pick odd $a$ and $b$ summing to $\Delta-1$,
and again start with $G_{a,b,k}$.  Rather than combining two copies of
$G_{a,b,k}$, we simply add a right leaf adjacent to $w$ and a left leaf adjacent
to $v$; call the resulting graph $G'_{a,b,k}$.  The rest of the proof is nearly
identical, with $G'_{a,b,k}$ in place of $H'_{a,b,k}$.  But now the resulting
graph has maximum degree $a+b+1=\Delta$.

\section{Planar Bipartite Graphs of High Girth} 
\label{sec3}

By Proposition~\ref{propA1} below, every graph $G$ satisfies
$\chios(G)\le 2\chi(G)$.  So every bipartite $G$ satisfies $\chios(G)\le 4$.
Caro et al.~asked~\cite[Problem 6.8]{CPS}, for each even $\Delta\ge 4$,
whether there exists
$g_{\Delta}$ such that every planar bipartite graph $G$ with maximum degree
$\Delta$ and girth at least $g_{\Delta}$ satisfies $\chios(G)\le 3$.
We answer this question negatively.  
The main result of this section is the following.

\begin{thm}
For every even integer $\Delta$, with $\Delta\ge 4$, and every positive integer
$g$ there exists a bipartite planar graph $G_{\Delta,g}$ with maximum degree
$\Delta$ and girth at least $g$ such that $\chios(G_{\Delta,g})=4$.
\end{thm}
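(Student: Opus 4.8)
The plan is to mimic the structure of the proof of Theorem~\ref{Gabk-thm}, but to replace the odd-length paths (which force triangles via the $6k+3$-cycles and odd-cycle leaf-cycles) with even-length gadgets so that the whole construction stays bipartite, while still pinning down the odd-dominating set essentially uniquely. First I would engineer a bipartite analogue of Lemma~\ref{Gabk-lem}: take two vertices $v,w$ and connect them by several internally disjoint $v,w$-paths, all of even length (say lengths $\equiv 0 \bmod 3$ and $\equiv 2\bmod 3$ among the even residues, i.e. lengths like $6k$ and $6k+4$, or whatever residue classes make the forcing argument work and keep parity of the path lengths even so the graph is bipartite). The key combinatorial fact, proved exactly as in Lemma~\ref{Gabk-lem} by induction on distance from an absent endpoint, is that if $v$ or $w$ is outside the set $D$ then $D$ restricted to each path is forced, and moreover by choosing the multiplicities $a,b$ of the two path-types to be odd, the parity contribution at $v$ and $w$ is forced to be odd as well.

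Next I would build the global graph. As in the odd case, combine copies of these bipartite $G_{a,b,k}$-gadgets by identifying $v$-vertices (choosing odd $a_1,a_2,b_1,b_2$ with $a_1+a_2+b_1+b_2=\Delta$, legitimate since $\Delta$ is even), attach a leaf to each $w$, and then close up many copies into a single graph by running a cycle through all the ``left leaves'' and another through all the ``right leaves.'' The crucial change from Section~\ref{sec2} is that these two leaf-cycles must be \emph{even} cycles (to keep $G_{\Delta,g}$ bipartite), so I would use, say, $g$ copies (or $2\lceil g/2\rceil$ copies) of $H'_{\Delta,g}$ rather than $4k+1$. One must check bipartiteness carefully: the gadget paths have even length, the leaf edges contribute parity, and the leaf-cycles have even length, so a global $2$-coloring exists; making the two leaf-cycles run through copies in the same cyclic order preserves planarity just as before. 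The forcing argument then shows $J:=G_{\Delta,g}$ has a unique odd-dominating set $D$, namely all vertices lying in copies of $H_{\Delta,g}$ (all $w$'s forced in because their leaf-neighbors are forced out, then all $v$'s forced in by the parity-addition trick at $v$, then each whole gadget forced in by the ``both endpoints in'' case).

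Finally, to conclude $\chios(G_{\Delta,g})=4$ I invoke Proposition~\ref{propA1}: $\chios = \min_D \{\chi(J[D]) + \chi(J[V\setminus D])\}$, and since $D$ is unique it suffices to show $\chi(J[D])\ge 2$ and $\chi(J[V\setminus D])\ge 2$, i.e. that each of $J[D]$ and $J[\overline D]$ contains at least one edge. That is immediate: $J[D]$ contains the gadget paths (lots of edges), and $J[\overline D]$ contains the two leaf-cycles (hence edges). Combined with the general bound $\chios\le 2\chi(G)=4$ for bipartite $G$, this gives exactly $4$. The main obstacle I anticipate is the simultaneous balancing of three constraints in the gadget design: (i) the path lengths must be even (for bipartiteness of the gadget and of the leaf-cycles), (ii) the path lengths must lie in residue classes mod $3$ that make the odd-dominating set on each path unique given its trace on $\{v,w\}$, and (iii) the girth must exceed $g$, which forces the lengths to grow with $g$ while staying in the prescribed residue classes. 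Checking that a consistent choice exists, and that the parity bookkeeping at $v$ and $w$ (using oddness of $a$ and $b$) still forces oddness there, is the heart of the argument; after that, the global assembly and the appeal to Proposition~\ref{propA1} are routine adaptations of Section~\ref{sec2}.
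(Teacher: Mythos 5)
Your approach is correct, but it is genuinely different from the paper's. The paper does not rebuild the gadgets; it starts from any graph with the relevant property (e.g.\ the graph of Theorem~\ref{Gabk-thm}), raises the maximum degree to $\Delta$ by adding pairs of length-4 paths at a maximum-degree vertex, and then subdivides every edge $6s+3$ times. The key step there is a local lemma (Figure~\ref{extension-fig}): after a triple subdivision, contracting the three new edges turns any odd-dominating set of the new graph into one of the old graph, and neither the set nor its complement can become independent; iterating gives bipartiteness and large girth simultaneously, and Proposition~\ref{propA1} yields $\chios\ge 2+2=4$. Your route instead redoes the Section~\ref{sec2} construction from scratch with even path lengths and an even number of copies of $H'_{\Delta,k}$, so the graph is bipartite outright and has a unique odd-dominating set; this works because the analysis in Lemma~\ref{Gabk-lem} depends only on the path lengths modulo 3 (the forced pattern on a path has period 3), not on their parity, so the case analysis and the odd-$a$, odd-$b$ parity bookkeeping at $v$ and $w$ go through verbatim. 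One concrete correction: the even lengths must avoid residue $0\bmod 3$, so take lengths $\equiv 2$ and $\equiv 4\pmod 6$ (e.g.\ $6k+2$ and $6k+4$); your tentative ``$6k$'' is divisible by 3, and for such a path the trace on $\{v,w\}$ does not determine the interior (there are two extensions or none), which would break uniqueness. With that fix, an even number $N\ge g$ of copies makes the leaf-cycles and all mixed cycles even, the forcing argument pins $D$ to be exactly the gadget vertices, both $J[D]$ and its complement contain edges, and Proposition~\ref{propA1} together with $\chios\le 2\chi=4$ finishes. What the paper's subdivision method buys is modularity (it upgrades any example with the lower-bound property to a bipartite, high-girth one and handles the degree boost generically); what your direct construction buys is a self-contained explicit family that avoids the subdivision/contraction lemma.
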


\begin{proof}
We begin with a sketch of the proof.  We start with an arbitrary planar graph
$G$ (with maximum degree even and at most $\Delta$) such that, for every
odd-dominating set $D$, we have $\chi(G[D])\ge 2$ and $\chi(G[V(G)\setminus
D])\ge 2$.  For example, $G$ could be the graph constructed by Theorem~\ref{Gabk-thm}.
We first add some paths (each with one endpoint at a common vertex
of maximum degree) to increase the maximum degree to $\Delta$.  Next, we
subdivide some edges to ensure that the resulting graph both (a) has high girth
and (b) is bipartite.

The intuition motivating the first step is that, when we add two paths of a
common length (equal to $1\bmod 3$ or equal to $2\bmod 3$) between two
vertices, any odd-dominating set $D$ in the resulting graph also restricts to an
odd-dominating set in the original graph.  This is because the intersections
of $D$ with the vertices of the two paths must look identical, so each endpoint
of the paths has the same number of neighbors in $D$ on each path.

The intuition behind the second step (subdividing each edge, possibly multiple
times) is that subdividing an edge 3 times does not change whether or not a
given subset of the original vertices can be extended to an odd-dominating set.
Essentially, an odd dominating set for the original graph extends to an odd
dominating set for the new graph in exactly one way.  So, starting from our
initial graph $G$ we can simply subdivide each edge $6s+3$ times, for some
choice of $s$ large enough.  This will ensure that (a) the new graph has high
girth and (b) the new graph is bipartite; in fact, each vertex of the original
graph is in the same part.  
Finally, we will need to check that these subdivisions preserve the property
that, for every odd-dominating set $D$, we have $\chi(G[D])\ge 2$ and
$\chi(G[V(G)\setminus D])\ge 2$.  Thus, by Proposition~\ref{propA1} we get that
$\chios\ge 2+2=4$.  Now we provide the details.

Let $G$ be a graph as in the first paragraph; for example, take $G$ to
be any graph constructed as in the proof of Theorem~\ref{Gabk-thm}.
Fix a vertex $r$ with maximum degree in $G$; recall that $d(r)$ is even.  
Pick a neighbor $r'$ of $r$ and add $\Delta-d(r)$ $r,r'$-paths, each of length
4 (that is, having 3 new internal vertices); in fact, we show how to add two
paths, and repeat that process $(\Delta-d(r))/2$ times. 
Call the resulting graph $G'$, and
note that $G'$ has maximum degree $\Delta$.

Now suppose that we are given a path $P$ with length not divisible by 3, and
call its endpoints $v$ and $w$.\aside{$v$, $w$}  For each possible specified
intersection $D\cap \{v,w\}$ there is precisely one set $D\subseteq V(P)$ such
that $|D\cap N[x]|\equiv 1\bmod 2$ for every vertex $x\in
V(P)\setminus\{v,w\}$.  (We showed this in the first paragraph proving
Lemma~\ref{Gabk-lem}; see Figure~\ref{Gabk-fig}.)  Suppose that $P^1$ and $P^2$
are two internally-disjoint $v,w$-paths of the same length (not divisible by
3).  Now if we prescribe the intersection $D\cap \{v,w\}$, then there is
precisely one set $D$ such that $|D\cap N[x]|\equiv 1\bmod 2$ for every $x\in
(V(P^1)\cup V(P^2))\setminus\{v,w\}$.  Further, $|N[x]\cap D\cap
V(P^1)|=|N[x]\cap D\cap V(P^2)|$ for all $x\in\{v,w\}$.  Suppose that we are
given a graph $G$, pick arbitrary vertices $v,w\in V(G)$, and form \Emph{$G'$} 
from $G$ by adding two (internally disjoint) $v,w$-paths of equal length, not
divisible by 3; call the paths $P^1$ and $P^2$\aside{$P^1$, $P^2$}.  If $D'$ is
an odd-dominating set in $G'$, then $D'\setminus ((V(P^1)\cup V(P^2))\setminus
\{v,w\})$ is an odd-dominating set in $G$.  (In fact, we can also extend every
odd-dominating set in $G$ to an odd-dominating set in $G'$, but we will not
need this fact for our proof.)  This formalizes and proves the first step in
our outline.

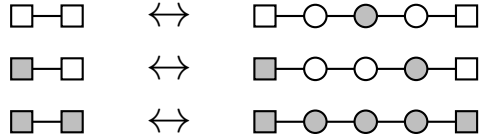
\begin{figure}[!h]
\centering
\begin{tikzpicture}[thick, scale=.67, yscale=.7]
\tikzset{every node/.style=uCircle}

\begin{scope}
\draw (0,0) node[uSquare] {} --++(1,0)  node[uSquare] {};
\end{scope}

\begin{scope}[yshift=-1.5cm]
\draw (0,0) node[uSquareFill] {} --++(1,0)  node[uSquare] {};
\end{scope}

\begin{scope}[yshift=-3cm]
\draw (0,0) node[uSquareFill] {} --++(1,0)  node[uSquareFill] {};
\end{scope}

\begin{scope}[xshift=1.15in]
\draw (0,-.05) node[draw=none] {\LARGE{$\leftrightarrow$}};
\draw (0,-1.55) node[draw=none] {\LARGE{$\leftrightarrow$}};
\draw (0,-3.05) node[draw=none] {\LARGE{$\leftrightarrow$}};
\end{scope}

\begin{scope}[xshift=1.9in]
\begin{scope}
\draw (0,0) node[uSquare] {} 
--++(1,0) node[uCircle] {}
--++(1,0) node[uCircleFill] {}
--++(1,0) node[uCircle] {}
--++(1,0) node[uSquare] {};
\end{scope}

\begin{scope}[yshift=-1.5cm]
\draw (0,0) node[uSquareFill] {} 
--++(1,0) node[uCircle] {}
--++(1,0) node[uCircle] {}
--++(1,0) node[uCircleFill] {}
--++(1,0) node[uSquare] {};
\end{scope}

\begin{scope}[yshift=-3cm]
\draw (0,0) node[uSquareFill] {} 
--++(1,0) node[uCircleFill] {}
--++(1,0) node[uCircleFill] {}
--++(1,0) node[uCircleFill] {}
--++(1,0) node[uSquareFill] {};
\end{scope}

\end{scope}

\end{tikzpicture}
\caption{Left: The possibilities that 0, 1, or 2 endpoints of an edge $vw$
appear in an odd-dominating set (up to swapping the two endpoints) in a graph
$G'$.  Right: For each possibility on the left, there is a unique way to extend
an odd-dominating set $D'$ in $G'$ to an odd-dominating set $D''$ in the graph
$G''$, formed from $G'$ by subdividing the edge 3 times.\label{extension-fig}%
}
\end{figure}

Now we consider the second step.  Suppose that we form \Emph{$G''$} from $G'$ by
subdividing a single edge $vw$ 3 times; denote the set of three new vertices by
\Emph{$V_2$}.  We show that if $\chi(G'[D'])\ge 2$ and
$\chi(G'[V(G')\setminus D'])\ge 2$ for every odd-dominating set $D'$ in $G'$,
then also $\chi(G''[D''])\ge 2$ and $\chi(G''[V(G'')\setminus D''])\ge 2$ for
every odd-dominating set \Emph{$D''$} of $G''$.  To do this, we first show that
if we start with $D''$ in $G''$ and contract the three edges newly added to
$G'$, then we form a set $D'$ that is an odd-dominating set in $G'$.
To see this, we note that either $\{v,w\}\cup V_2\subseteq D''$ or else $|V_2\cap
D''|=1$.  Figure~\ref{extension-fig} shows all possible cases (up to swapping the names $v$ and
$w$, when $|D''\cap \{v,w\}|=1$).
Finally, we must show that if neither $D'$ nor $V(G')\setminus D'$ is
independent in $G'$, then neither $D''$ nor $V(G'')\setminus D''$ is independent
in $G''$.  But this also follows from inspecting Figure~\ref{extension-fig}
(bottom and top).  If $v,w\in D'$,
then we have $\{v,w\}\cup V_2\subseteq D''$.  And if $v,w\notin D'$, then we
have $|(\{v,w\}\cup V_2)\setminus D''|=4$.  All other edges induced by $D'$ and
$V(G')\setminus D'$ are preserved when transforming $G'$ to $G''$.
By induction on the number of times that we thrice subdivide an edge, we
conclude that every graph $G''$ formed from $G'$ by repeated application of this
procedure has the property that, for every odd-dominating set $D''$ in $G''$,
neither $D''$ nor $V(G'')\setminus D''$ is independent. 
So Proposition~\ref{propA1} implies that $\chios(G'') =
\min_{D''}\{\chi(G''[D''])+\chi(G''[V(G'')\setminus D''])\}\ge 2+2=4$.  If we
subdivide to ensure that $G''$ is bipartite, then the inequality must hold with
equality.
\end{proof}

\section{Planar Graphs with Odd-sum Chromatic Number 8 and Maximum Degree 5}
\label{sec4}

Caro et al.~conjectured~\cite[Conjecture 6.6]{CPS} that every planar graph with
maximum degree at most 5 has odd-sum chromatic number at most 7.  In this
section, we disprove their conjecture.

\begin{thm}
There exist 2-connected planar graphs $G$ with maximum degree 5 and
$\chios(G)=8$.
\label{Delta5chios8-thm}
\end{thm}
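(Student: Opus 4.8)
The plan is to invoke Proposition~\ref{propA1}. Since $G$ is planar, the $4$ Color Theorem gives $\chi(G[D])\le 4$ and $\chi(G[V(G)\setminus D])\le 4$ for every odd-dominating set $D$, so $\chios(G)\le 8$ automatically; hence it suffices to build a $2$-connected planar graph $G$ with $\Delta(G)=5$ for which \emph{every} odd-dominating set $D$ satisfies $\chi(G[D])\ge 4$ and $\chi(G[V(G)\setminus D])\ge 4$. A clean way to guarantee each inequality is to make $G$ contain a fixed copy $A$ of $K_4$ with $A\subseteq D$ for all $D$, and a fixed copy $B$ of $K_4$ with $B\cap D=\emptyset$ for all $D$; here $K_4$ is the smallest $4$-chromatic graph and has maximum degree only $3$, so each of its vertices still has two spare incidences for attaching forcing gadgets. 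Thus the real task is: design gadgets, attachable to the four vertices of a $K_4$ using only those spare incidences, that pin the vertices inside (resp.\ outside) every odd-dominating set, while keeping the whole graph planar, $2$-connected, and of maximum degree $5$; then I would superimpose one ``outside'' $K_4$ and one ``inside'' $K_4$ on a common host.

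For the \emph{outside} copy $B=\{z_1,z_2,z_3,z_4\}$ this is easy. To each $z_i$ attach, by a single edge, a ``blocker'': a copy of $K_4$ on new vertices $\{w_i,x_i,y_i,x_i'\}$ in which $x_i,y_i,x_i'$ receive no other incidences and $w_i$ is the vertex joined to $z_i$. The odd-sum condition at $x_i$ forces $|D\cap\{w_i,x_i,y_i,x_i'\}|$ to be odd, so the condition at $w_i$, namely $|D\cap\{w_i,x_i,y_i,x_i'\}|+[z_i\in D]\equiv 1\pmod 2$, forces $z_i\notin D$. Hence $B\cap D=\emptyset$ and $K_4=G[B]\subseteq G[V(G)\setminus D]$, and each $z_i$ and each $w_i$ has degree $4$. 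One also reads off that the condition at $z_i$ — all of whose $K_4$-neighbours lie outside $D$ — forces $w_i\in D$, so the four blocker-apexes $w_1,\dots,w_4$ are forced \emph{into} every odd-dominating set; these are natural anchors for the inside copy.

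The \emph{inside} copy $A=\{t_1,t_2,t_3,t_4\}$ is the crux, and where the degree bound bites hardest; I expect this to be the main obstacle. The symmetric difficulty is that parity alone cannot pin a $K_4$ entirely inside $D$: if all four $t_i$ have the same parity of external $D$-neighbours, the odd-sum conditions fix only $|D\cap A|\bmod 2$, leaving $|D\cap A|\in\{0,2,4\}$ (or $\{1,3\}$) open, and forcing a \emph{single} vertex into $D$ is much harder than forcing one out (a pendant path of length $2\bmod 3$ forces its attachment vertex out, but no small gadget forces one in without a two-sided structure that eats several incidences). My intended fix is to propagate membership into the $K_4$ along short rigid chains: joining $t_i$ to a vertex $c_i$ by a path of length $3$ (two new degree-$2$ vertices $a_i,b_i$) forces $t_i\in D\iff c_i\in D$ directly, from the odd-sum conditions at $a_i$ and $b_i$; it then suffices to anchor each $c_i$ at a vertex robustly inside every odd-dominating set. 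The delicate point — and the bookkeeping the construction must do — is that adding this chain perturbs the condition at $c_i$, so $c_i$ has to sit in a sub-structure rigid enough to stay forced into $D$ after the perturbation (a blocker-apex $w_i$, or a small cycle gadget such as $C_5$, whose unique odd-dominating set is all of $V(C_5)$, suitably combined), all while each $t_i$ spends only its two spare incidences.

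Finally, $2$-connectivity is arranged as in Section~\ref{sec2}: any dangling piece is either built on a $2$-connected core or has its free ends threaded into a cycle, exactly as the left and right leaves are cycled together in the proof of Theorem~\ref{Gabk-thm}. And once one such $G$ exists, \emph{infinitely many} do: subdividing any edge three times does not change, for any odd-dominating set, which original vertices belong to it (this is the content of Figure~\ref{extension-fig} and the induction in Section~\ref{sec3}), so both copies of $K_4$ — lying on original vertices — survive while the graph grows without bound. By Proposition~\ref{propA1}, each of these graphs has $\chios=4+4=8$.
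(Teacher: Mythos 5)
Your framework is the same as the paper's: by Proposition~\ref{propA1} and the Four Color Theorem it suffices to build a planar, 2-connected, maximum-degree-5 graph in which every odd-dominating set $D$ contains one fixed $K_4$ and misses another. Your ``outside'' gadget is correct and is really just Observation~\ref{obs1} in disguise (with $v=x_i$, $w=w_i$, $x=z_i$, since $N[w_i]=N[x_i]\cup\{z_i\}$); the paper achieves the same exclusion with the leaves of its extended bowties, which are then made to induce a $K_4$.

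The genuine gap is exactly where you flag it: the ``inside'' $K_4$ is never actually constructed. You propose length-3 chains so that $t_i\in D\iff c_i\in D$ (correct), but then you need each anchor $c_i$ to be forced into every odd-dominating set \emph{after} the chain is attached, and you concede that attaching the chain perturbs the parity condition at $c_i$ without resolving it. This is not a minor bookkeeping issue. For instance, if $c_i=w_i$ (a blocker apex), the condition at $w_i$ now only yields $z_i\in D\iff b_i\in D$, so the exclusion of $z_i$ is no longer forced; and if $c_i$ lies on a $C_5$, the conditions at the four unperturbed cycle vertices leave two solutions, so membership of $c_i$ is again undetermined by the gadget alone. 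So the crux of the theorem --- a degree-respecting, planar gadget pinning an entire $K_4$ inside every odd-dominating set --- is missing. The paper supplies precisely this: the graph $H$ built from $K_4$ on $v_1,\dots,v_4$ together with $w_{12},w_{34}$ and two pendant vertices $x_1,x_3$, for which a short case analysis (starting from $x_1,x_3\notin D$ via Observation~\ref{obs1}, hence $v_1,v_3\in D$, and then propagating) shows the odd-dominating set is \emph{unique} and contains the $K_4$; the two neighbors of $H$ in the host graph are made leaves of extended bowties, hence excluded from $D$, so the analysis of $H$ survives inside $G$ (Figure~\ref{8-5-fig}). Your 2-connectivity step is likewise only asserted by analogy with Section~\ref{sec2}; with pendant blockers and pendant chains this needs an explicit assembly, since each $w_i$ as you describe it is a cut vertex. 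As written, the proposal is a plausible program, not a proof.
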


To prove Theorem~\ref{Delta5chios8-thm}, the following easy observation is
helpful.

\begin{obs}
\label{obs1}
Fix a graph $G$.  If there exist vertices $v,w,x\in V(G)$ such that $x\notin
N[v]$ and $N[w]=N[v]\cup\{x\}$, then $x$ cannot appear in any odd-dominating set
for $G$.
\end{obs}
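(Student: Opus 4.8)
The plan is to read the conclusion straight off the parity conditions at $v$ and $w$. Fix an odd-dominating set $D$ of $G$ (if $G$ has no odd-dominating set the claim is vacuous). Since $x\notin N[v]$, the hypothesis $N[w]=N[v]\cup\{x\}$ exhibits $N[w]$ as a \emph{disjoint} union of $N[v]$ and $\{x\}$. Intersecting both sides with $D$ and counting therefore gives
\[
|D\cap N[w]| = |D\cap N[v]| + |D\cap\{x\}|.
\]
By the definition of an odd-dominating set, both $|D\cap N[w]|$ and $|D\cap N[v]|$ are odd, so $|D\cap\{x\}|$ is even; as this quantity is $0$ or $1$, it must be $0$, i.e.\ $x\notin D$. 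Since $D$ was an arbitrary odd-dominating set, no odd-dominating set of $G$ contains $x$.

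There is essentially no obstacle here: the only thing to verify is that $N[v]\cup\{x\}$ genuinely is a disjoint union, which is precisely the assumption $x\notin N[v]$, and after that the parity subtraction does all the work. One could remark in passing that the hypotheses force $v$ and $w$ to be distinct and adjacent — if $w=x$ or $w=v$ one quickly contradicts $x\notin N[v]$ — but this is not needed for the argument and will presumably only be convenient when the observation is later applied to pin down the odd-dominating sets of the gadgets used for Theorem~\ref{Delta5chios8-thm}. I would keep the write-up to these few lines rather than dressing it up.
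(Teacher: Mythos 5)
Your proof is correct and is essentially the paper's argument: both exploit that $N[w]$ is the disjoint union of $N[v]$ and $\{x\}$, so the parity constraints at $v$ and $w$ force $|D\cap\{x\}|$ to be even, i.e.\ $x\notin D$ (the paper phrases it as a contradiction assuming $x\in D$, you phrase it directly, which is an immaterial difference).
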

\begin{proof}
Suppose, to the contrary, that $D$ is an odd-dominating set for $G$ and $x\in
D$.  Now $|N[w]\cap D| = 1 + |N[v]\cap D| \not\equiv |N[v]\cap D| \bmod 2$, a
contradiction.  Thus, $x\notin D$.
\end{proof}

\begin{proof}[Proof of Theorem~\ref{Delta5chios8-thm}]
Begin with a copy of $K_4$ induced by vertices $v_1,v_2,v_3,v_4$.  Form \Emph{$H$} 
from this $K_4$ by adding four new vertices $w_{12},w_{34},x_1,x_3$ with
$N_H(w_{12})=\{v_1,v_2\}$, $N_H(w_{34})=\{v_3,v_4\}$, $N_H(x_1)=\{v_1\}$, and
$N_H(x_3)=\{v_3\}$.  Note that $H$ is induced by the 8 leftmost vertices in
Figure~\ref{8-5-fig}.  (Here $v_1$ is at the top of the $K_4$ and $v_3$ is at
the bottom right.\footnote{Since $H$ has an automorphism swapping $v_1$
and $v_3$, we can also take $v_1$ and $v_3$ to be interchanged.}) 
We first show that $H$ has a unique odd-dominating set
$\{v_1,v_2,v_3,v_4,w_{12},w_{34}\}$.  It is easy to check that
these 6 vertices form an odd-dominating set of $H$.  Thus, we must only verify
uniqueness.

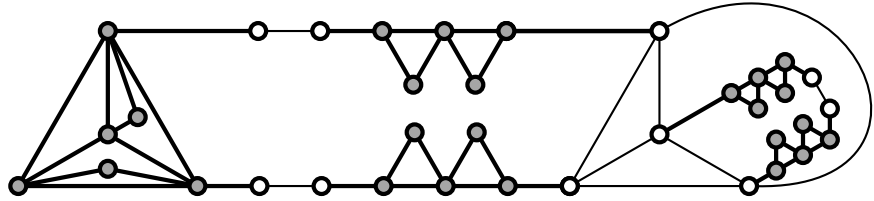
\begin{figure}[!h]
\def\mrad{2.5cm}
\centering
\begin{tikzpicture}[thick, scale=.55]
\tikzstyle{uCircle}=[shape = circle, minimum size = 6pt, inner sep = 1.0pt, outer sep = 0pt, fill = white, draw]
\tikzstyle{uCircleFill}=[shape = circle, minimum size = 6pt, inner sep = 1pt, outer sep = 0pt, fill = black!35!white, draw]
\tikzset{every node/.style=uCircleFill}
\clip (-5,-2) rectangle (20,3.25);
\begin{scope}[ultra thick]
\draw[white] (-5,0) -- (-4.5,0); 
\draw (0,0) node (v2) {};
\draw (90:\mrad) node (v1) {};
\draw (210:\mrad) node (v4) {};
\draw (330:\mrad) node (v3) {};
\draw (v1) -- (v2) -- (v3) -- (v4) -- (v1) -- (v3) (v2) -- (v4);
\draw (barycentric cs:v2=1,v3=1,v4=1) node (w34) {};
\draw (v3) -- (w34) -- (v4);
\draw (barycentric cs:v1=1,v2=1,v3=1) node (w12) {};
\draw (v1) -- (w12) -- (v2);
\end{scope}
\draw[ultra thick] (v3) --++ (1.5,0) node[uCircle] (v3leaf) {};
\draw (v3leaf) --++ (1.5,0) node[uCircle] (z3leaf) {}; 
\draw[ultra thick] (z3leaf) 
--++ (1.5,0) node (x3) {}
--++ (60:1.5) node {} --++ (-60:1.5) --++ (60:1.5) node {} --++ (-60:1.5) 
(x3) --++ (1.5,0) node {} --++ (1.5,0) node {} --++ (1.5,0) node[uCircle] (z3) {};
\draw (z3) --++ (60:1.732*\mrad) node[uCircle] (z1) {};
\draw[ultra thick] (z1) --++ (-3.7,0) node {} --++ (240:1.5) node{} --++ (120:1.5) --++
(240:1.5) node {} --++ (120:1.5) (z1) --++ (-3.7,0) node{} --++ (-1.5,0) node
{} --++ (-1.5,0) node {} --++ (-1.5,0) node[uCircle] (z1leaf) {};
\draw (z1leaf) --++ (-1.5,0) node[uCircle] (v1leaf) {};
\draw[ultra thick] (v1leaf) {} -- (v1);
\draw (z1) --++ (90:-\mrad) node[uCircle] (z2) {} --++ (330:\mrad) node[uCircle] (z4) {} --++
(-1.732*\mrad,0) node[uCircle] {} -- (z2);
\begin{scope}[scale=.5, rotate=30]
\draw[ultra thick] (z2) --++ (4.0,0) node (x'''3) {} 
--++ (-60:1.5) node {} --++ (-300:1.5) node {}
--++ (-60:1.5) node {} --++ (-300:1.5) node {}
--++ (-60:1.5) node[uCircle] (x''''3) {} (x'''3) --++ (1.5,0) --++ (1.5,0);
\draw[ultra thick] (z4) --++ (1.5,0) node (z'3) {} 
--++ (60:1.5) node {} --++ (300:1.5) node {}
--++ (60:1.5) node {} --++ (300:1.5) node {}
--++ (60:1.5) node[uCircle] (z''''3) {} (z'3) --++ (1.5,0) --++ (1.5,0);
\draw (z''''3) -- (x''''3);
\draw (z1) edge[out=0, in = 330, looseness=3] (z4);
\end{scope}
\draw[ultra thick] (z1) node[uCircle] {} (z2) node[uCircle] {} (z3)
node[uCircle] {} (z4) node[uCircle] {} (v1leaf) node[uCircle] {} (z3leaf)
node[uCircle] {};
\end{tikzpicture}
\caption{$G$ is a planar graph with maximum degree 5 and $\chios(G)=8$. Shaded
vertices denote the unique odd-dominating set of $G$.  Bold edges denote 4
induced ``extended bowties'' and an induced subgraph called $H$ (left).%
\label{8-5-fig}%
}
\end{figure}

Let $D_H$ be an arbitrary odd-dominating set for $H$.
By Observation~\ref{obs1}, we note that $x_1,x_3\notin D_H$.  This implies that
$v_1,v_3\in D_H$.  Since $\{v_1,v_3\}$ is not an odd-dominating set, $D_H$ must
contain additional vertices; in fact $D_H$ must contain $v_2$ or $v_4$, or both.
By symmetry, assume that $v_2\in D_H$.  This implies that $w_{12}\in D_H$, which, in
turn, implies that $v_4\in D_H$; finally, $v_4\in D_H$ implies that $w_{34}\in
D_H$, as claimed.

By an \Emph{extended bowtie} we mean a 7-vertex graph formed from two copies of
$K_3$ by identifying a vertex in each copy, and then adding leaves adjacent
to two non-adjacent vertices in the resulting 5-vertex graph.  We call the
vertices of degree 1 in an extended bowtie its \emph{leaves}.
Note that the graph $G$, shown in Figure~\ref{8-5-fig} contains 4 induced
extended bowties, with edges in bold.  Consider an odd-dominating set $D$ for
the graph $G$ shown in
Figure~\ref{8-5-fig}.  By Observation~\ref{obs1}, each leaf of an extended
bowtie is omitted from $D$.  Since 4 of these leaves induce $K_4$, we see that
$\chi(G[V(G)\setminus D])=4$.  (It is true, although not needed for the proof,
that in each extended bowtie all vertices but the leaves are contained in $D$.)
Let $\partial H$ denote the set of 2 vertices outside $H$ with neighbors in
$H$.  Since each vertex of $\partial H$ is omitted from $D$, our
previous analysis for $D\cap H$ still applies.  Thus, $K_4\subseteq G[D]$ and
$K_4\subseteq G[V(G)\setminus D]$.  So $\chios(G)=8$, by
Proposition~\ref{propA1}.
\end{proof}

It is worth noting that the graph $G$, shown in Figure~\ref{8-5-fig}, is far
from being the unique planar graph $G$ with maximum degree 5 and $\chios(G)=8$.
In fact, we can easily construct infinitely many of these.  One way to do
this is to add arbitrary planar subgraphs (of sufficiently low maximum degree)
that are adjacent to leaves of extended bowties.  If we are careful, we can also
ensure that the resulting graphs remain 2-connected.  Another nice variation is
to replace the two extended bowties adjacent to $H$ by two disjoint
chains (of arbitrary length) of extended bowties, with the leaf of one adjacent
to the leaf of the next in the chain.

\section{Odd-sum Chromatic Number of Surfaces}
\label{sec5}

In this short section, for each orientable surface $\Sigma_g$ we consider
$\chios(\Sigma_g)$, which is the maximum value of $\chios(G)$ taken over all
graphs $G$ that embed in $\Sigma_g$.  Caro et al.~posed the problem: ``Determine
$\chios(\Sigma_g)$, where $g$ is the Euler genus.''  They continued ``It is our
belief that for some positive constant C, it turns out that $H(\Sigma_g)+C$
colors always suffice, where $H(\Sigma_g) = \left\lfloor \frac{7+
\sqrt{1+48g}}2\right\rfloor$ is the Heawood number of the surface $\Sigma_g$.''
We disprove this belief as follows.

\begin{thm}
$\chios(\Sigma_g)\ge -3+\sqrt{24g-67}$.  In particular, $\lim_{g\to
\infty}\chios(\Sigma_g)-H(\Sigma_g)=\infty$.
\label{surfaces-prop}
\end{thm}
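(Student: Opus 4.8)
The plan is to produce, for each genus $g$, a graph $G$ embeddable in $\Sigma_g$ with $\chios(G)$ growing like $\sqrt{g}$, which beats the Heawood number $H(\Sigma_g)=\Theta(\sqrt{g})$ only by a constant factor but still forces the difference $\chios(\Sigma_g)-H(\Sigma_g)$ to infinity since the leading constants differ. By Proposition~\ref{propA1}, it suffices to exhibit a graph $G$ with a \emph{unique} odd-dominating set $D$ such that both $G[D]$ and $G[V(G)\setminus D]$ contain large cliques, so that $\chios(G)=\chi(G[D])+\chi(G[V(G)\setminus D])$ is large. The most efficient way to pin down the odd-dominating set is to reuse the gadget ideas from Sections~\ref{sec2}--\ref{sec4}: attach pendant/leaf structures (as in Observation~\ref{obs1} and the extended bowties) to every vertex of two disjoint cliques $K_n$ so that, in any odd-dominating set, certain designated leaves are forbidden, which in turn forces all clique vertices of one $K_n$ into $D$ and all clique vertices of the other $K_n$ out of $D$. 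Then $\chi(G[D])\ge n$ and $\chi(G[V(G)\setminus D])\ge n$, giving $\chios(G)\ge 2n$.

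Concretely, first I would take two disjoint copies $A$ and $B$ of $K_n$. For each vertex of $A$ I attach a small gadget (an extended-bowtie-type attachment, or the $H$-gadget trick from Theorem~\ref{Delta5chios8-thm}) guaranteeing via Observation~\ref{obs1} that the gadget's leaf is excluded from every odd-dominating set and that this exclusion propagates so the $A$-vertex itself must lie in $D$; for each vertex of $B$ I attach a complementary gadget forcing that vertex out of $D$. One must also check, case by case, that such a $D$ really exists (existence of an odd-dominating set), which is where a concrete small gadget with an easily verified parity table is useful — the bottom rows of Figures~\ref{Gabk-fig} and~\ref{extension-fig} are the model. The second step is the genus bookkeeping: the whole construction has $O(n^2)$ edges (dominated by the two $K_n$'s), so by Euler's formula it embeds in a surface of Euler genus $O(n^2)$; chasing the constants and comparing with $H(\Sigma_g)=\big\lfloor\frac{7+\sqrt{1+48g}}{2}\big\rfloor$ yields a bound of the stated shape $\chios(\Sigma_g)\ge -3+\sqrt{24g-67}$ once $n$ is chosen as large as the genus budget allows.

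The main obstacle I anticipate is \emph{not} the genus estimate (that is a routine Euler-formula computation) but rather arranging the forcing gadgets so that (i) the odd-dominating set is genuinely forced to split the two cliques the way we want, (ii) an odd-dominating set still exists at all, and (iii) the gadgets add only $O(n^2)$ — ideally $O(n)$ — extra edges so they do not blow up the genus beyond the $K_n$ contribution. A clean way to handle (i)--(ii) simultaneously is to make each gadget a single edge subdivided $3$ times hanging off a clique vertex together with a forbidden leaf à la Observation~\ref{obs1}, or to borrow the exact $H$-gadget of Theorem~\ref{Delta5chios8-thm} whose unique odd-dominating set is already worked out; then the parity checks reduce to the finite cases already verified in the earlier figures, and the count of added edges is linear in $n$. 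Once the gadgetry is fixed, I would finish by invoking Proposition~\ref{propA1} to conclude $\chios(G)=2n$, then substitute the relation between $n$ and $g$ to get $\chios(\Sigma_g)\ge -3+\sqrt{24g-67}$, and finally observe $2n\sim 2\sqrt{g}\cdot(\text{const})$ versus $H(\Sigma_g)\sim\sqrt{12g}$ to see the difference tends to infinity.
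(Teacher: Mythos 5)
There is a genuine gap, and it sits exactly at the step you dismiss as routine: the genus bookkeeping. Euler's formula only gives \emph{lower} bounds on genus (it certifies that a graph does \emph{not} embed in a given surface); it does not produce embeddings. To make your construction work you need an \emph{upper} bound on the genus of a graph containing two copies of $K_n$, with the sharp constant $\gamma(K_n)\approx n^2/12$ --- and that is precisely the Ringel--Youngs theorem, which you never invoke. With only elementary upper bounds (say, a spanning tree plus one handle per remaining edge, giving genus at most $|E|-|V|+1\approx n^2/2$ per clique), your two-clique construction yields $n\approx\sqrt{g}$ and hence only $\chios(\Sigma_g)\gtrsim 2\sqrt{g}$, which is \emph{smaller} than $H(\Sigma_g)\approx\sqrt{12g}\approx 3.46\sqrt{g}$; neither the stated inequality $\chios(\Sigma_g)\ge-3+\sqrt{24g-67}$ nor the divergence of $\chios(\Sigma_g)-H(\Sigma_g)$ follows. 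The whole point of the theorem is a fight over the leading constant, so ``embeds in a surface of Euler genus $O(n^2)$'' is not enough: you must use that two disjoint $K_n$'s embed in genus about $(n-3)(n-4)/6$, i.e.\ the hard, constructive direction of the Heawood problem. (The paper avoids building anything: it takes $K_2\Box K_n$, for which \cite{CPS} already proved $\chios(K_2\Box K_n)=2n$ for odd $n$, and bounds its genus by combining White's bound on the genus of a Cartesian product, Theorem~\ref{white-thm}, with Ringel--Youngs.)

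The gadget part of your plan is also left unverified, though it is repairable and genuinely different in spirit from the paper's route. Two cautions if you pursue it: Observation~\ref{obs1} needs a near-twin structure, so attaching a pendant leaf to \emph{every} vertex of the clique $A$ destroys the twin relation $N[v_1]=N[v_2]\cup\{x_1\}$ you want to exploit; you should leave (at least) one clique vertex untouched and then argue its membership in $D$ by parity, and separately check that an odd-dominating set exists at all (forcing a vertex of $B$ out of every odd-dominating set can be done with a pendant path of length two, again via Observation~\ref{obs1}). Finally, the precise constants $-3$ and $-67$ in the statement come from a specific computation with $\gamma(K_n)=\lceil(n-3)(n-4)/12\rceil$; ``a bound of the stated shape'' needs that computation carried out, not asserted. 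Once you add Ringel--Youngs (or simply use the paper's $K_2\Box K_n$ shortcut), the rest of your outline can be completed, but as written the quantitative heart of the theorem is missing.
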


For our proof, we will simply bound (from above) the genus of the product
$K_2\Box K_n$.  Recall~\cite[Proposition~3.12]{CPS} that $\chios(K_2\Box K_n) = 2n = \chios(K_{2n})$, when $n$ is
odd.
Intuitively we should expect that if a given surface admits an embedding of
$K_{2n}$, then it should also admit an embedding of $K_2\Box K_{n'}$ for some
$n'>n$, since the latter graph is less dense than the former.  To formalize this
intuition, we need a bound on the genus of a cartesian product.
In the next theorem, the \emph{first Betti number} of a graph $H$, denoted
$\B(H)$ is given by $\B(H):=|E(H)|-|V(H)|+1$.

\begin{thmA}[\cite{white}]
The genus $\gamma(G_1\Box G_2)$ of $G_1\Box G_2$ satisfies the inequality:
$$\gamma(G_1\square G_2) \le
|V(G_1)|\gamma(G_2)+|V(G_2)|\gamma(G_1)+\B(K_{|V(G_1)|,|V(G_2)|}).$$
\label{white-thm}
\end{thmA}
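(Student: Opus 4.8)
The plan is to prove the bound by an explicit surface construction, assembling an embedding of $G_1\Box G_2$ out of minimum-genus embeddings of the two factors and controlling the number of extra handles needed to glue them together. Write $m:=|V(G_1)|$ and $n:=|V(G_2)|$. First I would reduce to the case that $G_1$ and $G_2$ are both connected (applying the result to each pair of components and summing, which is routine since genus is additive over connected components). The organizing observation is that the $mn$ vertices $(u,v)$ of $G_1\Box G_2$ are in bijection with the $mn$ edges of $K_{m,n}$: index one side of $K_{m,n}$ by $V(G_1)$ and the other by $V(G_2)$, and associate the vertex $(u,v)$ with the edge $uv$ of $K_{m,n}$. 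Since $\B(K_{m,n})=mn-(m+n)+1=(m-1)(n-1)$, the target is to spend exactly this many handles on interconnection, on top of the genus intrinsic to the factors.

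For the construction, recall that $G_1\Box G_2$ is the union of $n$ \emph{layers} (a copy of $G_1$ for each vertex of $G_2$) and $m$ \emph{columns} (a copy of $G_2$ for each vertex of $G_1$), and that layer $v$ and column $u$ share exactly the vertex $(u,v)$. I would start from $n$ disjoint minimum-genus embeddings of $G_1$, one per layer, together with $m$ disjoint minimum-genus embeddings of $G_2$, one per column; this is a (disconnected) orientable surface with $m+n$ components and total genus $n\gamma(G_1)+m\gamma(G_2)$. At this stage each vertex $(u,v)$ occurs as two distinct points: once in the layer-copy on a $G_1$-surface and once in the column-copy on a $G_2$-surface. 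To build $G_1\Box G_2$ I then merge, for each of the $mn$ vertices, its layer-copy with its column-copy by attaching a thin tube between small disks around the two points; the merged point becomes the single vertex $(u,v)$, from which both its $G_1$-edges (in the layer) and its $G_2$-edges (in the column) now emanate. Performing all $mn$ tube-merges yields an orientable surface carrying $G_1\Box G_2$, and it only remains to bound its genus.

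The key step, and the one I would write out most carefully, is the handle accounting for these tubes. The surgery lemma is standard: attaching a tube between disks around two points changes the surface as follows --- if the points lie in different components it merges them with the genera simply adding (no new handle), while if they lie in the same component it adds exactly one handle (genus $+1$); in both cases orientability is preserved by attaching the tube compatibly with the orientations. Now track the process: we begin with $m+n$ components and end with one (the final surface is connected because the bipartite incidence pattern of the tubes is exactly $K_{m,n}$, which is connected), so across the $mn$ operations the component count drops by $m+n-1$. Each merging operation drops it by exactly $1$ and each non-merging operation leaves it fixed, so \emph{exactly} $m+n-1$ of the tubes are merging and the remaining $mn-(m+n-1)=(m-1)(n-1)$ are non-merging, contributing one handle apiece. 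Hence the final genus equals $n\gamma(G_1)+m\gamma(G_2)+(m-1)(n-1)=n\gamma(G_1)+m\gamma(G_2)+\B(K_{m,n})$, which gives the claimed upper bound on $\gamma(G_1\Box G_2)$. I expect the only delicate points to be a clean statement and proof of the surgery lemma (including the orientability bookkeeping) and the observation --- independent of the order of operations --- that the number of non-merging tubes is forced to be $(m-1)(n-1)$; the rest is immediate.
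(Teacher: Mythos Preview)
The paper does not prove this statement at all: Theorem~B is quoted from White~\cite{white} as a black box and immediately applied in the proof of Theorem~\ref{surfaces-prop}. So there is no ``paper's own proof'' to compare your argument against.

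That said, your surgical construction is essentially White's original argument, and the outline is sound. A couple of points are worth tightening. First, the phrase ``the merged point becomes the single vertex $(u,v)$'' hides a small manoeuvre: attaching a tube between disks around two points does not literally identify those points. What you should say is that after removing the disks, the edge-stubs from the layer copy meet one boundary circle and the edge-stubs from the column copy meet the other; you then place a single new vertex on the cylinder and join all stubs to it. This yields the correct local picture at $(u,v)$ and does not affect the genus count. Second, your reduction to connected factors is fine but deserves one line of arithmetic: if $G_1$ has $s$ components and $G_2$ has $t$ components, summing the bound over all $st$ pairs gives $m\gamma(G_2)+n\gamma(G_1)+(m-s)(n-t)$, which is at most the stated bound since $s,t\ge 1$. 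With those two clarifications, the handle accounting (exactly $m+n-1$ merging tubes and $(m-1)(n-1)=\B(K_{m,n})$ handle-adding tubes, independent of the order) is correct and completes the proof.
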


We will also need the following well-known result.
\begin{thmA}[\cite{RY}]
The genus $\gamma(K_n)$ of the complete graph $K_n$ is given by
$$\gamma(K_n) = \left\lceil\frac{(n-3)(n-4)}{12}\right\rceil.$$
\end{thmA}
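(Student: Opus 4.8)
The plan is to prove the two inequalities separately, as is standard. The \emph{lower bound} $\gamma(K_n)\ge\lceil (n-3)(n-4)/12\rceil$ is a direct consequence of Euler's formula. Any minimum-genus embedding of a connected graph is a $2$-cell embedding (Youngs' theorem), so in a surface of genus $g$ we have $n-E+F=2-2g$, where $E=n(n-1)/2$ is the number of edges of $K_n$. Because $K_n$ is simple for $n\ge 3$, every face is bounded by at least three edges and each edge lies on exactly two faces, giving $3F\le 2E$, i.e.\ $F\le n(n-1)/3$. Substituting into Euler's formula and solving for $g$ yields $g\ge (n-3)(n-4)/12$, and since $g$ is an integer the ceiling bound follows. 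I would also record that equality forces $3F=2E$, i.e.\ a \emph{triangular} embedding, which is possible only when $12\mid(n-3)(n-4)$, namely when $n\bmod 12\in\{0,3,4,7\}$; this marks the residues where the two bounds meet exactly and those where a slack of one unit must be absorbed by the ceiling.

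The \emph{upper bound} is the entire content of the Heawood map-color problem and is where all the work lies. Here I would construct, for every $n$, an orientable embedding of $K_n$ attaining the bound. First I would set up the combinatorial machinery of \emph{rotation systems} (Heffter--Edmonds): an orientable $2$-cell embedding is equivalent to a choice of cyclic rotation of the edges incident to each vertex, the faces are recovered by the standard face-tracing rule, and the genus is read off from Euler's formula. The target is a triangular embedding when $n\bmod 12\in\{0,3,4,7\}$ and an embedding with the smallest possible number of larger faces otherwise.

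The key device for producing these rotation schemes is Gustin's theory of \emph{current graphs}. I would take a cubic graph $C$ embedded with a single face (an ``index-$1$'' rotation), label its edges by nonzero elements (currents) of a suitable abelian group $\Gamma$ of order related to $n$ (typically cyclic), and impose Kirchhoff's current law at every vertex. Tracing the single face of $C$ then ``logs'' the group elements in an order that, by the current law, yields a symmetric and consistent family of rotations for $K_n$ in which \emph{every} face is a triangle. The verification amounts to checking that (a) the current law holds at each trivalent vertex, (b) each nonzero element of $\Gamma$ is logged exactly once, so the scheme is complete, and (c) the Heffter consistency conditions (``rule $R^\ast$'') hold, guaranteeing that face-tracing recovers each edge of $K_n$ exactly twice. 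For the eight non-triangular residues I would modify the construction—introducing a bounded number of ``vortices'' and a few controlled non-triangular faces, or adjoining a single handle carrying the leftover adjacencies—so that the genus matches $\lceil (n-3)(n-4)/12\rceil$ exactly.

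The main obstacle is precisely this upper-bound construction: there is no single uniform current graph that works for all $n$. The argument splits into the twelve residue classes of $n\bmod 12$, each demanding its own infinite family of current graphs with explicitly chosen currents satisfying conditions (a)--(c), together with a handful of small \emph{exceptional} values of $n$—the notorious irregular cases—for which the generic families fail and ad hoc embeddings must be exhibited by hand. Assembling all twelve families, verifying the current law and completeness in each, and disposing of the exceptional cases is the hard, case-by-case heart of the proof; by contrast, the Euler-formula lower bound is routine.
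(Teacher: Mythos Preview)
The paper does not prove this theorem at all: it is stated as a known result, attributed to Ringel and Youngs~\cite{RY}, and immediately invoked inside the proof of Theorem~\ref{surfaces-prop}. There is nothing to compare your proposal against; the paper simply cites the formula as a black box and moves on.

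For what it is worth, your sketch is an accurate high-level summary of how the Ringel--Youngs theorem is actually established in the literature: the Euler-formula lower bound is routine, and the upper bound is the map-color theorem, proved via rotation systems, current graphs with Kirchhoff's law, a split into the twelve residue classes modulo~$12$, and a handful of ad hoc small cases. But none of that belongs in this paper, and the paper makes no attempt to reproduce it. If your task was to match the paper's treatment of this statement, then the correct answer is ``no proof; cite~\cite{RY}.''
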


\begin{proof}[Proof of Theorem~\ref{surfaces-prop}]
We consider the graph $K_2\Box K_n$.  By Theorem~\ref{white-thm}, we have
$\gamma(K_2\Box K_n) \le 2\lceil\frac{(n-3)(n-4)}{12}\rceil+ n(0)
+(2n-(2+n)+1)\le(n-3)(n-4)/6+11/6+n-1$.
Solving for $n$ gives $n=\lfloor(1+\sqrt{24\gamma-67})/2\rfloor$.  
So there exists odd $n$ such that $K_2\Box K_n$ embeds in $\Sigma_g$ and $n\ge
\lfloor (1+\sqrt{24\gamma-67})/2\rfloor-1\ge (-3+\sqrt{24\gamma-67})/2$.
Thus, $\chios(\Sigma_g)\ge -3+\sqrt{24g-67}$.  Note that this expression is
larger than the Heawood number, $H(\Sigma_g)$, for all $g\ge 30$.  Further, as
$g\to \infty$ this difference tends to infinity.
\end{proof}

In the argument above,
we have not made an effort to calculate the additive constant precisely,
preferring instead a simpler and shorter proof.

\section{Many Odd-Dominating Sets}
\label{sec6}
Caro et al.~\cite[Problem~6.5]{CPS} asked for a planar graph $G$ with
$\chios(G)=8$ that has at least two odd-dominating sets.  Recall that a vertex
subset $D$ is \emph{odd-dominating} if $|N[x]\cap D|\equiv 1\bmod 2$ for all
$x\in V(G)$. For each graph $G$, let $\ods(G)$\aside{$\ods$} denote the number of
odd-dominating sets of $G$.  In this section, we
show that there exist planar graphs $G_t$ with $\chios(G_t)=8$ and with at least
$t$ odd-dominating sets, for every positive integer $t$.
A \Emph{bowtie} is formed from two copies of $K_3$ by identifying one vertex in
each copy.  We call the vertex of degree 4 in a bowtie its \emph{center}.

\begin{lem}
Given an arbitrary graph $G$ and $v\in V(G)$, we form $G^B_v$ from $G$ by
identifying the center of a new bowtie with $v$.  Now $\chios(G^B_v)\ge
\chios(G)$ and $\ods(G^B_v)=4\ods(G)$; see Figure~\ref{ods-ext-fig}.
\label{many-ods-lem}
\end{lem}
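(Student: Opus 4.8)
\textbf{Proof proposal for Lemma~\ref{many-ods-lem}.}

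The plan is to analyze exactly how an odd-dominating set of $G^B_v$ interacts with the four ``tip'' vertices of the attached bowtie, and to show this interaction decomposes cleanly into independent binary choices on each triangle. Write the bowtie as two triangles sharing the center $v$: triangle $T_1$ on $\{v,a_1,a_2\}$ and triangle $T_2$ on $\{v,b_1,b_2\}$, where $a_1,a_2,b_1,b_2$ are the four new degree-2 vertices. First I would observe that for $i\in\{1,2\}$ the vertex $a_i$ has closed neighborhood $N[a_i]=\{v,a_1,a_2\}$, which does not involve the rest of $G$ at all; likewise $N[b_i]=\{v,b_1,b_2\}$. So the odd-domination constraints at $a_1,a_2,b_1,b_2$ only see the bowtie. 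A short case check (there are only four subsets of $\{a_1,a_2\}$) shows that the constraints ``$|N[a_1]\cap D|$ odd'' and ``$|N[a_2]\cap D|$ odd'' force $\{a_1,a_2\}\subseteq D$ when $v\in D$, and force exactly one of $a_1,a_2$ in $D$ (two choices) when $v\notin D$. Symmetrically for $\{b_1,b_2\}$. Crucially, in either case $|\{a_1,a_2\}\cap D|$ is odd iff $v\notin D$, and the same for $\{b_1,b_2\}$, so $|N[v]\cap D|$ picks up a contribution from the bowtie of exactly $[v\in D] + 2\cdot(\text{parity-}1\text{ terms})$; I would record that the bowtie's contribution to $|N[v]\cap D|$ is congruent mod $2$ to $|\{v\}\cap D|$ plus an \emph{even} quantity, hence the parity of $|N[v]\cap D|$ as seen ``from the $G$-side'' is unchanged by the attachment. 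This is the technical heart of the argument, and the only place where one must be careful.

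Granting that, I would set up a bijection as follows. Let $D$ be any odd-dominating set of $G^B_v$, and let $D_G := D\cap V(G)$. The argument above shows that for every $x\in V(G)\setminus\{v\}$ we have $N_{G^B_v}[x]=N_G[x]$, so the constraint at $x$ is literally the constraint that $|N_G[x]\cap D_G|$ is odd; and the constraint at $v$ says $|N_G[v]\cap D_G| + (\text{bowtie contribution})$ is odd, which by the parity computation is equivalent to $|N_G[v]\cap D_G|$ being odd. Hence $D_G$ is an odd-dominating set of $G$. Conversely, given an odd-dominating set $D'$ of $G$, the case analysis shows there are exactly two ways to complete $\{a_1,a_2\}$ and exactly two ways to complete $\{b_1,b_2\}$ to satisfy the bowtie constraints (the completion being forced if $v\in D'$, and a free binary choice on each triangle if $v\notin D'$ — but in the $v\in D'$ case ``forced'' still means one valid completion, so to get the factor $4$ uniformly one checks: if $v\in D'$ then $a_1,a_2,b_1,b_2$ all in $D$, a unique completion; wait — that gives only $1$, not $4$). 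I would therefore re-examine the $v\in D'$ case: when $v\in D$, oddness of $|N[a_1]\cap D|=|\{v,a_1,a_2\}\cap D|$ with $v\in D$ requires $|\{a_1,a_2\}\cap D|$ even, i.e.\ both or neither; applying the same at $a_2$ gives the same condition, so $\{a_1,a_2\}\cap D\in\{\emptyset,\{a_1,a_2\}\}$ — two choices — and similarly two for $\{b_1,b_2\}$. So in \emph{both} cases ($v\in D'$ or $v\notin D'$) there are exactly $2$ valid completions on each triangle, hence $4$ total. This yields $\ods(G^B_v)=4\,\ods(G)$.

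For the inequality $\chios(G^B_v)\ge\chios(G)$, I would use Proposition~\ref{propA1}: $\chios(G^B_v)=\min_D\{\chi(G^B_v[D])+\chi(G^B_v[V\setminus D])\}$ over odd-dominating $D$ of $G^B_v$. Fix a minimizing $D$ and let $D_G=D\cap V(G)$, which is odd-dominating in $G$ by the above. Since $G[D_G]$ is an induced subgraph of $G^B_v[D]$ and $G[V(G)\setminus D_G]$ is an induced subgraph of $G^B_v[V(G^B_v)\setminus D]$, we get $\chi(G^B_v[D])\ge\chi(G[D_G])$ and $\chi(G^B_v[V\setminus D])\ge\chi(G[V(G)\setminus D_G])$, so the minimand for $G^B_v$ at $D$ is at least $\chi(G[D_G])+\chi(G[V(G)\setminus D_G])\ge\chios(G)$. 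Taking the minimum over $D$ gives $\chios(G^B_v)\ge\chios(G)$. The only real obstacle in all of this is getting the mod-$2$ bookkeeping at $v$ exactly right — in particular being careful that the bowtie always contributes an even number of neighbors of $v$ to $D$ whenever $v\in D$ (two or zero from each triangle) and an odd number when $v\notin D$ (one from each triangle, hence two total — also even!) — so in fact the bowtie's contribution to $|N[v]\cap D|$ is $|\{v\}\cap D|$ plus an even number in every case, which is precisely what makes $D_G$ inherit odd-domination at $v$, and what makes the completion count a clean $2\times 2$.
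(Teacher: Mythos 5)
Your argument is correct and follows essentially the same route as the paper: analyze the odd-domination constraints at the four bowtie tips, observe that they contribute an even amount to $|N[v]\cap D|$ in every case so that restriction to $V(G)$ preserves odd-domination, count exactly four valid completions per odd-dominating set of $G$, and get $\chios(G^B_v)\ge\chios(G)$ from Proposition~\ref{propA1} via induced subgraphs. Note that your initial claim that $v\in D$ forces $\{a_1,a_2\}\subseteq D$ is wrong (both or neither is allowed), but you catch and fix this yourself, and the corrected $2\times 2$ count matches the paper's explicit list of extensions.
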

\begin{proof}
Denote by $B$ the bowtie used to form $G^B_v$ from $G$. 
Denote $V(B)$ by $\{w,x_1,x_2,y_1,y_2\}$, such that $d_B(w)=4$ and $x_1x_2, y_1y_2\in E(B)$.
Consider an odd-dominating set $D$ of $G$.  First suppose that $v\in D$.
Now $D$ is also an odd-dominating set of $G^B_v$.  So are $D\cup\{x_1,x_2\}$,
$D\cup\{y_1,y_2\}$, and $D\cup\{x_1,x_2,y_1,y_2\}$; see the left of
Figure~\ref{ods-ext-fig}.  Suppose instead that $v\notin D$.  Now $G^B_v$ has
the four odd-dominating sets $D\cup\{x_1,y_1\}$, $D\cup\{x_1,y_2\}$, $D\cup
\{x_2,y_1\}$, and $D\cup\{x_2,y_2\}$; see the right of Figure~\ref{ods-ext-fig}.
This proves that $\ods(G^B_v)\ge 4\ods(G)$.  But actually, it is easy to reverse
this process.  For every odd-dominating set $D^B_v$ of $G^B_v$, it is true that
$D^B_v\cap V(G)$ is an odd-dominating set for $G$.  
Thus, $\chios(G^B_v)\ge \chios(G)$, by Proposition~\ref{propA1}. Further, it is easy to check that
exactly 4 odd-dominating sets of $G^B_v$ (those shown in
Figure~\ref{ods-ext-fig}) restrict to each odd-dominating set of $G$.  Thus,
$\ods(G^B_v)=4\ods(G)$, as claimed.
\end{proof}

\begin{figure}[!h]
\centering
\begin{tikzpicture}[thick]
\tikzstyle{W}=[shape = circle, minimum size = 6pt, inner sep = 1.0pt, outer sep = 0pt, fill = white, draw]
\tikzstyle{B}=[shape = circle, minimum size = 6pt, inner sep = 1pt, outer sep = 0pt, fill = black!35!white, draw]
\def\mrad{1.0cm}

\newcommand\cherry[7]
{
\begin{scope}[xshift=#6 in, yshift=#7 in]
\fill[fill=black!30!white]
    (0,0) -- (90:\mrad) arc (90:270:\mrad);
\draw (0,0) circle (\mrad);

\draw (90:\mrad) node[#1] (v) {} --++ (110:.7*\mrad) node[#2]{} --++ (220:.7*\mrad)
node[#3]{} -- (v)
 --++ (70:.7*\mrad) node[#4]{} --++ (320:.7*\mrad) node[#5]{} -- (v);
\end{scope}
}

\begin{scope}[scale=.7]
\cherry{B}{W}{W}{W}{W}{0}{0}
\cherry{B}{B}{B}{W}{W}{1.5}{0}
\cherry{B}{W}{W}{B}{B}{0}{-1.5}
\cherry{B}{B}{B}{B}{B}{1.5}{-1.5}

\begin{scope}[xshift=4in]
\cherry{W}{W}{B}{B}{W}{0}{0}
\cherry{W}{W}{B}{W}{B}{1.5}{0}
\cherry{W}{B}{W}{W}{B}{0}{-1.5}
\cherry{W}{B}{W}{W}{B}{1.5}{-1.5}
\end{scope}

\end{scope}
\end{tikzpicture}
\caption{
Left: An odd-dominating set $D$ of a graph $G$ (shown as a large circle) that
includes a vertex $v$, and 4 extensions of $D$ to odd-dominating sets of
$G^B_v$, the graph formed by identifying $v$ with the center of a new bowtie.
Right: An odd-dominating set $D$ of $G$ that excludes $v$,
and 4 extensions of $D$ to odd-dominating sets of $G^B_v$.%
\label{ods-ext-fig}
}
\end{figure}

\begin{thm}
For each integer $t\ge 0$ some planar graph $G_t$ has
$\chios(G_t)=8$ and $\ods(G_t)=4^t$.
\label{many-ods-thm}
\end{thm}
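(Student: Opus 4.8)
The plan is to combine Theorem~\ref{Delta5chios8-thm} with repeated application of Lemma~\ref{many-ods-lem}. Start with the graph $G$ from Theorem~\ref{Delta5chios8-thm}, which is a planar graph with $\chios(G)=8$; by the discussion there, $G$ has a unique odd-dominating set, so $\ods(G)=1=4^0$. This handles $t=0$. For general $t$, I would build $G_t$ by performing $t$ successive bowtie-attachments as in Lemma~\ref{many-ods-lem}, each time identifying the center of a fresh bowtie with some vertex of the current graph.

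The key points to check are the following. First, each bowtie-attachment multiplies $\ods$ by exactly $4$, by Lemma~\ref{many-ods-lem}, so after $t$ attachments we get $\ods(G_t)=4^t$ from the base value $\ods(G)=1$. Second, Lemma~\ref{many-ods-lem} gives only $\chios(G^B_v)\ge \chios(G)$, so $\chios(G_t)\ge 8$; for the reverse inequality I would note that $G_t$ is still planar (a bowtie is planar, and attaching at a single vertex preserves planarity, drawing the new bowtie in a face incident to the chosen vertex), hence $\chios(G_t)\le 8$ by the $4$~Color Theorem via Proposition~\ref{propA1}. So $\chios(G_t)=8$.

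The one genuine design choice — and the place where a little care is needed — is \emph{which} vertex to attach each new bowtie to, in order to keep the maximum degree controlled and, more importantly, to keep the graph planar as a whole. Attaching the bowtie center to a vertex $v$ raises $d(v)$ by $4$, so I would always attach to a vertex of the new bowtie from a \emph{previous} attachment (each such vertex has low degree), or more simply attach each successive bowtie to a leaf-type vertex of the previously added bowtie, forming a ``chain of bowties''; this keeps every vertex's degree bounded and planarity is transparent since we only ever add a planar gadget at a single cut vertex. Degree is not actually constrained in the statement of Theorem~\ref{many-ods-thm}, so this is purely for cleanliness.

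I do not expect any real obstacle here: the theorem is essentially an immediate corollary of Theorem~\ref{Delta5chios8-thm} and an induction on $t$ using Lemma~\ref{many-ods-lem}, with the base case $t=0$ being exactly the uniqueness statement already proved. The proof should be only a few sentences: establish the base case, apply the lemma $t$ times to track $\ods$, and invoke planarity plus the $4$~Color Theorem to pin $\chios$ at $8$.
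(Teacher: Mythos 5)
Your proposal is correct and follows essentially the same route as the paper: start from the graph of Theorem~\ref{Delta5chios8-thm} (whose unique odd-dominating set gives $\ods=1$), attach $t$ bowties and apply Lemma~\ref{many-ods-lem} inductively to get $\ods(G_t)=4^t$ and $\chios(G_t)\ge 8$, then use planarity and the bound $\chios\le 2\chi\le 8$ for the reverse inequality. The only cosmetic difference is that the paper re-derives the lower bound by tracking nested odd-dominating sets $D_0\subseteq D_t$ rather than chaining the lemma's inequality $\chios(G^B_v)\ge\chios(G)$, which amounts to the same argument.
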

\begin{proof}
Figure~\ref{8-5-fig} shows a planar graph $G_0$ with
$\chios(G_0)=8$ and $\ods(G_0)=1$.  Thus, by successively identifying the
centers of $t$ bowties with vertices of $G_0$ (with repeated use of a given
center allowed), by induction on $t$, we construct a planar graph $G_t$ with
$\ods(G_t)=4^t$.  The base case is $G_0$, and the induction step follows from
Lemma~\ref{many-ods-lem}.
Furthermore, for every odd-dominating set $D_t$ of $G_t$,
there exist an odd-dominating set $D_0$ of $G_0$ such that $D_0\subseteq D_t$
and such that $V(G_0)\setminus D_0 \subseteq V(G_t)\setminus D_t$.
Thus, we have $\chi(G_t[D_t])\ge \chi(G_0[D_0])\ge 4$.
Similarly, we have $\chi(G_t[V(G_t)\setminus D_t])\ge \chi(G_0[V(G_0)\setminus
D_0])\ge 4$.  Hence, $\chios(G_t)\ge 8$, as desired.  
As noted in the introduction,~\cite{CPS} showed that $\chios(G)\le 8$ for every planar graph $G$.
Thus, $\chios(G_t)=8$.
\end{proof}

In the proof of Theorem~\ref{many-ods-thm}, 
we can also identify the center of a new bowtie with a degree 2 vertex of a
previously added bowtie.  Thus, we can create graphs with arbitrarily large
diameter, and we can keep the maximum degree bounded while the number of
odd-dominating sets grows without bound.  Our examples are 2-edge-connected, but
have (potentially many) cut-vertices.  So it would still be interesting to know
of examples that are 2-connected.

\begin{ques}
Do there exist 2-connected planar graphs $G_t$ with $\chios(G_t)=8$ and
$\ods(G_t)\ge t$ (for all positive integers $t$)?
\end{ques}

\footnotesize{

}


\begin{thebibliography}{1}
\expandafter\ifx\csname url\endcsname\relax
  \def\url#1{\texttt{#1}}\fi
\expandafter\ifx\csname doi\endcsname\relax
  \def\doi#1{\burlalt{doi:#1}{http://dx.doi.org/#1}}\fi
\expandafter\ifx\csname urlprefix\endcsname\relax\def\urlprefix{ }\fi
\expandafter\ifx\csname href\endcsname\relax
  \def\href#1#2{#2}\fi
\expandafter\ifx\csname burlalt\endcsname\relax
  \def\burlalt#1#2{\href{#2}{#1}}\fi

\bibitem{CPS}
Y.~Caro, M.~Petru\v{s}evski, and R.~\v{S}krekovski.
\newblock Odd-sum colorings of graphs.
\newblock {\em Australasian J.~Combinatorics}, 85(2):195--219, 2023.
\newblock \urlprefix\url{https://ajc.maths.uq.edu.au/pdf/85/ajc_v85_p195.pdf}.

\bibitem{RY}
G.~Ringel and J.~W.~T. Youngs.
\newblock Solution of the {H}eawood map-coloring problem.
\newblock {\em Proc. Nat. Acad. Sci. U.S.A.}, 60:438--445, 1968.
\newblock \urlprefix\url{https://www.pnas.org/doi/abs/10.1073/pnas.60.2.438}.

\bibitem{white}
A.~T. White.
\newblock The genus of the {C}artesian product of two graphs.
\newblock {\em J. Combinatorial Theory Ser. B}, 11:89--94, 1971.
\newblock
  \urlprefix\url{https://www.sciencedirect.com/science/article/pii/0095895671900189}.

\end{thebibliography}
\end{document}